\newcommand{\bctikz}{$$\begin{tikzcd}}
\newcommand{\ectikz}{\end{tikzcd}$$}
\newcommand{\ntikz}{\end{tikzcd}\qquad\qquad\begin{tikzcd}}
\tikzset{
	slash/.style={postaction={
  	decorate,
    decoration={markings, mark=at position 0.5 with
    	{\node[font=\footnotesize] {\tiny{\rotatebox{-10}{$/$}}};}}}
  }
}
\newtheorem{thm}{Theorem}[section]
\newtheorem*{thm*}{Theorem}
\newtheorem{lem}[thm]{Lemma}
\newtheorem{prop}[thm]{Proposition}
\theoremstyle{definition}
\newtheorem{defn}[thm]{Definition}
\newtheorem*{defn*}{Definition}
\theoremstyle{definition}
\newtheorem{ex}[thm]{Example}
\theoremstyle{remark}
\newtheorem{rem}[thm]{Remark}
\newcommand{\mdot}[1][1.7]{%
  \mathpalette{\CdotAux{#1}}\cdot%
}
\newdimen\CdotAxis
\newcommand*{\CdotAux}[3]{%
  {%
    \settoheight\CdotAxis{$#2\vcenter{}$}%
    \sbox0{%
      \raisebox\CdotAxis{%
        \scalebox{#1}{%
          \raisebox{-\CdotAxis}{%
            $\mathsurround=0pt #2#3$%
          }%
        }%
      }%
    }%
    \dp0=0pt %
    \sbox2{$#2\bullet$}%
    \ifdim\ht2<\ht0 %
      \ht0=\ht2 %
    \fi
    \sbox2{$\mathsurround=0pt #2#3$}%
    \hbox to \wd2{\hss\usebox{0}\hss}%
  }%
}
\newlength{\myline}
\newcommandx*{\triplearrow}[4][1=0, 2=1]{
  \draw[line width=\myline,double distance=3\myline,#3] #4;
  \draw[line width=\myline,shorten <=#1\myline,shorten >=#2\myline,#3] #4;
}
\newcommand{\cat}[1]{{\mathbf{#1}}} 
\newcommand{\smallcat}[1]{{\mathcal{#1}}} 
\newcommand{\cell}[1]{{\widehat{#1}}} 
\newcommand{\afm}{\leftarrow}
\newcommand{\emb}{\hookrightarrow}
\newcommand{\atol}{\xrightarrow}
\newcommand{\afml}{\xleftarrow}
\DeclareMathOperator{\Hom}{Hom}
\newcommand{\Homm}{\mathbf{Hom}}
\DeclareMathOperator{\id}{id}
\DeclareMathOperator{\Ob}{Ob}
\newcommand{\colim}{\mathop{\mathchoice{\textrm{colim}}{\mathlarger{\textrm{colim}}}{\scriptstyle \textrm{colim}}{\scriptscriptstyle \textrm{colim}}}\limits}
\renewcommand{\lim}{\mathop{\mathchoice{\textrm{lim}}{\mathlarger{\textrm{lim}}}{\scriptstyle \textrm{lim}}{\scriptscriptstyle \textrm{lim}}}\limits}
\newcommand{\coprodl}{\coprod\limits}
\newcommand{\sint}{\smallint}
\newcommand{\soint}{{\textstyle\oint}}
\newcommand{\nats}{\mathbb{N}}
\newcommand{\Set}{\cat{Set}}
\newcommand{\Cat}{\cat{Cat}}
\newcommand{\Prof}{\cat{Prof}}
\newcommand{\G}{\cat{G}}
\newcommand{\C}{\smallcat{C}}
\newcommand{\D}{\smallcat{D}}
\newcommand{\E}{\smallcat{E}}
\newcommand{\M}{\smallcat{M}}
\newcommand{\ch}{\cell{\C}}
\renewcommand{\dh}{\cell{\D}}
\renewcommand{\prod}{\mathop{\mathchoice{\mathlarger{\mathlarger{\Pi}}}{\mathlarger{\mathlarger{\Pi}}}{\scriptstyle \Pi}{\scriptscriptstyle \Pi}}}
\renewcommand{\sum}{\mathop{\mathchoice{\mathlarger{\mathlarger{\Sigma}}}{\mathlarger{\mathlarger{\Sigma}}}{\scriptstyle \Sigma}{\scriptscriptstyle \Sigma}}}
\newcommand{\starrows}{{\;\mathinner{\underset{t}{\overset{s}{\rightrightarrows}}}\;}}
\newlength\thetawidth
\newlength\thetaheight
\newcommand{\one}{\mathbbm{1}}
\newcommand{\two}{\mathbbm{2}}
\newcommand{\MC}{f\hspace{-.025cm}mc}
\newcommand{\fc}{f\hspace{-.03cm}c}
\newcommand{\fm}{f\hspace{-.025cm}m}
\newcommand{\fdc}{f\hspace{-.03cm}dc}
\newcommand{\fduc}{f\hspace{-.03cm}duc}
\renewcommand{\O}{{\mathbf{0}}}
\newcommand{\I}{{\mathbf{1}}}
\newcommand{\cplush}{{\cell{\C^+}}}
\title{Enrichment of Algebraic Higher Categories}
\author{Brandon Shapiro}
\date{}                                           
\begin{document}
\maketitle

\begin{abstract}
We provide a definition of enrichment that applies to a wide variety of categorical structures, generalizing Leinster's theory of enriched $T$-multicategories. As a sample of newly enrichable structures, we describe in detail the examples of enriched monoidal categories and enriched double categories, with a focus on monoidal double categories as broadly convenient bases of enrichment. 
\end{abstract}

\setcounter{tocdepth}{1}
\tableofcontents

\section*{Introduction}

In the beginning, \emph{$V$-enriched categories} were defined for $V$ a monoidal category (see for instance \cite{kelly}), showing that the compositional structure of a category could be expressed in settings where the collection $\Hom(a,b)$ of arrows from $a$ to $b$ is not (necessarily) a set but rather some other type of mathematical object. 

\begin{defn*}
For $(V,\otimes,I)$ a monoidal category, a $V$-enriched category consists of
\begin{itemize}
	\item a collection $A$ of objects
	\item for each $a,b \in A$ an object $\Homm(a,b)$ of $V$
	\item for each $a \in A$ a morphism $I \to \Homm(a,a)$ in $V$
	\item for each $a,b,c \in A$, a morphism $\Homm(a,b) \otimes \Homm(b,c) \to \Homm(a,c)$ in $V$
\end{itemize}
subject to certain unit and associativity equations.
\end{defn*}

This is useful not only for including in category theory the common feature of a category's Hom-sets admitting the extra structure of a group or space or category or what not, but also for allowing the defining features of categories to apply in settings that would otherwise seem entirely unrelated: famously, Lawvere showed that a slight generalization of metric spaces can be defined as simply categories in which $\Homm(a,b)$ is not a set but a non-negative real number representing the distance from $a$ to $b$.

Since then, the theory of enrichment has expanded in three directions, each seeking to answer one of the following questions:
\begin{enumerate}
	\item What structures, in lieu of categories, can \emph{be enriched}?
	\item What structures, in lieu of monoidal categories, can a structure \emph{be enriched in}? In other words, what are the possible \emph{bases of enrichment}?
	\item What are the different ways in which one fixed structure can be enriched in another fixed structure? 
\end{enumerate}

To the first question, many higher category structures such as $n$-categories and multicategories can be enriched in a symmetric monoidal category $V$, replacing the sets of top-dimensional cells with fixed boundary with objects of $V$.\footnote{We have been unable to find references for enriched algebraic $n$-categories, though the idea is far from new. Enriched multicategories in this sense are discussed in \cite[Section 2]{ElmendorfMandell} among others, and specialize Leinster's earlier definition of enriched multicategories.} Leinster's theory defines enriched $T$-multicategories for any cartesian monad $T$ (see \cref{sec:multicat} for the definition of $T$-multicategories). In this paper, we define enriched $T$-algebras for any familial monad $T$, which includes enrichment of most algebraic categorical structures in common use (see \cref{sec:familial} for the definition of a familial monad). 

We take the position that the fundamental idea of enrichment is varying the nature of the upper-dimensional parts of any algebraic structure. The algebraic structures we are interested in are higher categories of various flavors, and in this framework any applicable structure is regarded as a type of higher category. Higher categories are typically defined using sets and functions, which can be replaced by objects and morphisms of a different category or category-like structure. Leinster's theory applies to a wide variety of higher categories which can be modeled as generalized multicategories: this includes categories, plain multicategories, and virtual double categories, but not $n$-categories, monoidal categories, or double categories. These latter three structures are among those we highlight as examples of enrichable familial monad algebras.

To the second question, categories enriched in a multicategory are defined in \cite{LambekEnrichment}, similar to classical enrichment but with the identity and composition morphisms replaced by 0-to-1 and 2-to-1 morphisms in a multicategory. Categories enriched in a bicategory are described in \cite{BenBicat} (as ``polyads'') and \cite{variation}, among others. Leinster defines $T$-multicategories enriched in $T^+$-multicategories where $T^+$ is a new cartesian monad built from $T$ whose algebras are $T$-multicategories (see \cref{sec:enrichedmulticats} for a definition of $T^+$). In this paper, we define $T$-algebras enriched in any $T$-multicategory $V$. In the same way as Leinster's theory (which it generalizes only slightly), this type of enrichment base *feels* to be the most general possible structure with which $T$-algebras can be enriched with the associativity equations holding up to equality. It would be difficult to precisely state, let alone prove, this kind of statement, but we hope that like with Leinster's theory this feeling comes to be shared by many others. 

There are also ``weak'' analogues of enrichment, such as categories enriched in a monoidal bicategory where the usual equations hold up to higher cells (called ``enriched bicategories'' in \cite{enrichedbicat}).\footnote{Note that this is orthogonal to the previous notion of categories enriched in a bicategory. The former treats monoidal categories as 1-object bicategories while the latter treats them as locally discrete bicategories.} While it is not the main focus of this paper, as it tends to work in mostly the same way across the various other types of enrichment definitions, we very briefly discuss in \cref{weakenrichment} how one could weaken our definitions of enrichment so that the relevant equations only hold up to higher isomorphisms.

While $T$-multicategories are the most general type of enrichment base for $T$-algebras we consider, we also emphasize a range of specializations of $T$-multicategories. We describe (in \cref{sec:multicat}) $T$-multicategories which are \emph{trivial}, \emph{discrete}, and/or \emph{representable} relative to some restriction of the cell shapes in $T$-algebras, and discuss how enrichment in these settings can preserve additional structure and/or recover existing notions of enrichment in the literature. For example, for $\fc$ the free category monad on graphs: 
\begin{itemize}
	\item $\fc$-multicategories are virtual double categories
	\item representable $\fc$-multicategories are pseudo-double categories
	\item 0-discrete representable $\fc$-multicategories are bicategories
	\item 0-trivial $\fc$-multicategories are plain multicategories
	\item 0-trivial representable $\fc$-multicategories are monoidal categories
\end{itemize}

A recurring theme in our examples is the prevalence of monoidal double categories as a convenient base of enrichment. Double categories arise as the representable $\fc$-multicategories, and monoidal double categories are the representable $fmc$-multicategories for $fmc$ the free monoidal category monad on graphs (see \cref{fmc}), so monoidal double categories are natural bases of enrichment for categories and monoidal categories. Monoidal double categories also provide examples of Leinster's $fm$-multicategories for $fm$ the free multicategory monad (\cref{enrichedplainmulticats}). We also show in \cref{verticallytrivialrep} and \cref{vertenricheddoublecats} that monoidal double categories are a natural base for a certain form of enrichment of double categories. We list again the various structures enrichable in a monoidal double category in \cref{app:mdcat}.

To the third question, 
a major emphasis in this paper is placed on the relevance, when enriching structures more complex than categories, of how much low-dimensional information in an enrichable structure should exist or be preserved in the enrichment base. In our recurring example, we emphasize this choice when enriching monoidal categories in a monoidal double category $V$: the underlying monoid structure on the objects can be preserved strictly, weakly, or laxly in $V$, with only lax preservation guaranteed by the most general definition of enrichment.

Unlike many mathematical texts which introduce definitions in order to prove theorems, in this paper we prove results in order to introduce definitions. We believe that a general notion of enrichment for a variety of higher category structures is of independent interest, and so the provable claims we make are either to support the existence of the elements of that definition or show that in examples the definition recovers existing forms of enrichment. We leave the development of more theory for these enriched structures (along the lines of \cite{kelly}) to future work, though we have shown in \cite{mythesis} that in many circumstances the category of $T$-algebras enriched in the cartesian monoidal category of $T'$-algebras is equivalent to the Eilenberg-Moore category of yet another familial monad, which implies various convenient properties. We describe in \cite{adaptives} (joint with David Spivak) several examples of structures enriched in a monoidal double category, in the settings of machine learning and probability.

The novel technical advances presented here are admittedly modest. Most of the supporting definitions we include (such as $T$-multicategories, triviality, discreteness, representability, $T^+$) are covered or alluded to in \cite{leinster} or \cite{LeinsterEnrichment}, as are many of the supporting propositions. Our main contribution is to extend to all algebraic higher categories Leinster's definition of enrichment for generalized multicategories by incorporating the idea of higher and lower dimensional cell-shapes via a generalized construction of ``indiscrete'' higher dimensional structures from lower dimensional structures, and describing in detail how it works for various new examples (most notably monoidal categories and double categories). Our hope is that this self contained presentation of enrichment, leveraging recent approaches to analyzing familial monads and covering old and new examples, will provide a useful reference for a wide variety of enrichment practitioners.

\subsection*{Notation}

$\one$ denotes the terminal category, while $\two$ denotes the category $0 \to 1$.

For $\C$ a small category, $\ch$ denotes the category $\Set^{\C^{op}}$ of presheaves on $\C$. $*$ denotes the terminal presheaf in $\ch$.

All of the familial monad algebras and enriched structures we describe here will be small in the set theoretic sense, which we will not specify each time. Large enriched structures can be defined similarly without much additional work, but we do not discuss that any further.

\subsection*{Acknowledgements}

We would like to thank David Spivak for many enlightening conversations, and Marcelo Aguiar for sharing his previous work on duoidal enrichment of monoidal/2-/double categories (\cite{AguiarEnrichment}). 
This material is based upon work supported by the Air Force Office of Scientific Research under award number FA9550-20-1-0348.

\section{Familial Monads}\label{sec:familial}

\begin{defn}\label{def:familial}
A \emph{familially representable} (or simply \emph{familial}) monad $T$ on a presheaf category $\ch$ is a cartesian monad with functor part
$$TX_c = \coprod_{t \in T(1)_c} \Hom_\ch(T[t],X)$$
for each $c$ in $\C$, where $T(1) : \C^{op} \to \Set$ and $T[-] : \sint T(1) \to \ch$.
\end{defn}

A $T$-algebra is a presheaf $A$ on $\ch$ equipped with a map $h : TA \to A$ that, for each $t \in T(1)_c$, sends $a : T[t] \to A$ to $h(a) \in A_c$. Hence $h$ can be viewed as ``composing'' the arrangement of cells $a$ into a single $c$-cell in $A$. Hence we consider $t \in T(1)_c$ as an ``operation'' outputting a $c$-cell, and $T[t]$ as its ``arity.''

The monad structure on $T$ can be summarized by operations $\eta(c) \in T(1)_c$ for each $c$ in $\C$, and $\mu(t,f) \in T(1)_c$ for each $t \in T(1)_c$ and $f : T[t] \to T(1)$, with
$$T[\eta(c)] \cong y(c) \qquad\qquad \textrm{and} \qquad\qquad T[\mu(t,f)] \cong \colim_{x \in T[t]} T[f(x)],$$
satisfying naturality, unit, and associativity equations (see \cite[Theorem 2.2]{representability}).

\begin{ex}\label{fmc}
Consider the category $\G_1 = \O \starrows \I$. $\widehat{\G_1}$ is the category of graphs $X$, where $X_0$ is the set of vertices and $X_1$ the set of edges, each edge equipped with a source and target vertex. The standard example of a familial monad is the free category monad $\fc$ on $\widehat{\G_1}$, which is the identity on vertices and has arities given by the walking length $n$ path graph for all $n \ge 0$ (see \cite[Example 1.2]{representability} for more details).

Our main example of a familial monad, however, will be the free \emph{monoidal} category monad $\MC$ on graphs.\footnote{To be clear, this is the free strict monoidal category monad. There are also free weak and/or symmetric monoidal category monads on graphs, but we find this one the most illustrative for our definitions of enrichment.} Combining in a sense (which can be made precise in several different ways) the free monoid monad on sets and the free category monad on graphs, we define $\MC$ via its representing family $(\MC(1),\MC[-])$ as follows:
\begin{itemize}
	\item $\MC(1)_0 = \nats$, and $\MC(1)_1 = \coprod\limits_{n \in \nats} \nats^n$, with both the source and target maps $\MC(1)_1 \to \MC(1)_0$ sending $(n,m_1,...,m_n)$ to $n$
	\item $\MC[n]$ is the graph with $n$ vertices and no edges, and $\MC[n;m_1,...,m_n]$ is the disjoint union of walking paths of length $m_i$ for $1 \le i \le n$. $\MC[n]$ includes into this graph as the $n$ source vertices or as the $n$ target vertices, completing the definition of $\MC[-] : \sint \MC(1) \to \widehat{\G_1}$
	\item The monad structure on the functor
$$\MC (X)_0 = \coprod_{n \in \MC(1)_0} \Hom(\MC[n],X), \qquad \MC X_1 = \coprod_{(n;m_1,...,m_n) \in \MC(1)_1} \Hom(\MC[n;m_1,...,m_n],X)$$
is described by identifying the operations $1 \in \MC(1)_0$ and $(1;1) \in \MC(1)_1$ with identities on the 0-cells and 1-cells of $X$, and by observing that replacing each edge in $T[n;m_1,...,m_n]$ with a disjoint union of paths in a compatible way (as in, adjacent edges are plugged with the same number of disjoint paths) yields a new, usually larger disjoint union of usually longer paths.
\end{itemize}

$\MC$-algebras are precisely (small) strict monoidal categories, in which objects can be composed as in a monoid and arrows can be composed either from paths, as in a category, or from potentially disjoint pairs as in a monoidal category. The unique operations for each disjoint union of paths of any length witness that any such arrangement has a unique composite, which encodes the strict unit, associativity, and interchange equations of a monoidal category.
\end{ex}

\begin{ex}
Let $\fdc$ denote the free double category monad on double graphs, which are presheaves over $\G_1 \times \G_1$. In a double graph $X$, $X_{0,0}$ is the set of vertices, $X_{1,0}$ is the set of horizontal edges, $X_{0,1}$ is the set of vertical edges, and $X_{1,1}$ is the set of squares between parallel pairs of horizontal and vertical edges. Each double graph has a ``horizontal underlying graph'' and a ``vertical underlying graph'' given by the vertices and horizontal or vertical edges, respectively.

$\fdc$ acts as the free category monad on both $(X_{0,0},X_{1,0})$ and $(X_{0,0},X_{0,1})$, and inserts a square for every $n \times m$ grid of squares in $X$. In other words, $\fdc(1)_{i,j} = \nats^{i+j}$, $\fdc[() \in \fdc(1)_{0,0}]$ is a vertex, $\fdc[n \in \fdc(1)_{1,0}]$ is the string of $n$ composable horizontal edges, $\fdc[m \in \fdc(1)_{0,1}]$ is the string of $m$ composable vertical edges, and $\fdc[n,m]$ is the walking $n \times m$ grid of squares.
\end{ex}

\begin{ex}
Recall that a \emph{duoid} is a set with two monoidal structures (and no assumed compatibility between them).

A \emph{duoidal category} is a category $\M$ equipped with two monoidal structures $(\diamond,I)$ and $(\star,J)$ such that $J : \one \to \M$ and $\star : \M \times \M \to \M$ are lax monoidal functors with respect to $(\diamond,I)$ and the coherence transformations are $(\diamond,I)$-monoidal. (We will assume the monoidal structures are strict, but there is an analogous definition when they are weak.)

This lax monoidality means that, for instance, there is a not-necessarily-invertible morphism $I \to J$, a morphism $J \diamond J \to J$, a morphism $I \to I \star I$, and the interchange equation holds only up to a morphism
$$(a \star b) \diamond (c \star d) \to (a \diamond c) \star (b \diamond d).$$

Duoidal categories are algebras for a familial monad $\fduc$ on graphs, where $\fduc(1)_0$ is the free duoid on one object with arities sending each word to its set of variables, and $\fduc(1)_1$ is the set consisting of pairs of words in the free duoid on $\nats$ such that the second word is reachable by a finite sequence of the basic non-invertible duoidal structure maps of the four forms above. The arity of such a pair is the disjoint union of string graphs whose lengths are given by the natural numbers in the first word (which are the same as those in the second word, though perhaps in a different order).
\end{ex}

\subsection{Restriction of cell shapes}

Some documented forms of enrichment for higher dimensional structures include not just sets of lower dimensional cells, as in classical enriched categories, but lower dimensional algebraic structures. Enriched 2-categories, for instance, could be defined as a 1-category with an object of some $V$ for each pair of adjacent arrows, equipped with horizontal and vertical identity and composite maps. To facilitate this sort of definition, we describe how to extract these lower dimensional algebraic structures from a familial monad $T$ and related constructions.

\begin{defn}
A subcategory $\D$ of $\C$ is a \emph{restriction of cell shapes} if it is full and downward closed, meaning that for any $i : c' \to c$ in $\C$, if $c$ is in $\D$ then so is $c'$.
\end{defn}

The goal of this definition is to codify the properties we will need for a meaningful distinction between ``lower dimensional'' and ``higher dimensional'' cell shapes. Restrictions of cell shapes are sometimes called \emph{cribles} and are equivalent to functors $\C \to \two$ by taking the fiber over $0$. We will write $u : \D \to \C$ for the inclusion of such a subcategory.

There is an adjunction between $\ch$ and $\dh$ where the right adjoint $u^\ast$ restricts a presheaf on $\C$ to its $d$-cells for $d$ in $\D$, and the left adjoint $u_!$ sends a presheaf on $\D$ to the presheaf on $\C$ with no $c$-cells for all $c \in \Ob(\C) \backslash \Ob(\D)$. We say a presheaf in the image of $u_!$ (meaning one with no $c$-cells for $c$ not in $\D$) ``arises from $\dh$.''

\begin{ex}
$u : \G_0 \to \G_1$ is a restriction of cell shapes, where $\G_0$ is the terminal category containing the object 0 in $\G_1$. $\widehat{\G_0}$ is then the category of sets. $u^\ast$ sends a graph to its set of vertices, and $u_!$ sends a set to the graph with that set of vertices and no edges.
\end{ex}

\begin{ex}\label{endpoints}
Generalizing the above example, an object of $\C$ is an \emph{endpoint object} if it has no non-identity outgoing morphisms. A collection of endpoint objects in $\C$ determines a restriction of cell shapes given by the full subcategory of $\C$ on all of its other objects. An endpoint object is a ``top dimensional cell shape'' in a higher category structure, such as the arrow in categories, the $n$-cell in $n$-categories, and the $n$-to-1 arrow in a multicategory for all $n$.
\end{ex}

\begin{defn}
A familial monad on $\ch$ is \emph{$\D$-graded} for $\D$ a restriction of cell shapes if for all $d$ in $\D$, $t \in T(1)_d$, $T[t]$ arises from $\dh$.
\end{defn}

\begin{prop}\label{restriction_monad}
Given a $\D$-graded familial monad $T$ on $\ch$, there is a familial monad $T_\D$ on $\dh$ with $T_\D(1) = u^\ast T(1)$ and $$T_\D[-] : \sint u^\ast T(1) \to \sint T(1) \atol{T[-]} \ch \atol{u^\ast} \dh).$$
\end{prop}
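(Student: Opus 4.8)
The plan is to verify that the candidate data $(T_\D(1), T_\D[-])$ on $\dh$ assembles into a familial monad, by transporting the monad structure on $T$ along the adjunction $u_! \dashv u^\ast$. First I would check that $T_\D[-]$ is well-defined: for $d$ in $\D$ and $t \in u^\ast T(1)_d = T(1)_d$, the arity $T[t]$ arises from $\dh$ by the $\D$-grading hypothesis, so $u^\ast T[t] = u^\ast u_! (T[t]|_\D)$ recovers $T[t]|_\D$ and no information is lost; thus applying $u^\ast$ to $T[-]\colon \sint T(1) \to \ch$ and precomposing with the evident functor $\sint u^\ast T(1) \to \sint T(1)$ (which is well-defined since $u^\ast T(1)$ is literally the restriction of the presheaf $T(1)$) gives a functor landing in $\dh$. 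This defines a familial endofunctor $T_\D$ on $\dh$ in the sense of \cref{def:familial}.

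Next I would exhibit the unit and multiplication. For the unit, the operations $\eta(d) \in T(1)_d$ for $d$ in $\D$ are already elements of $T_\D(1)_d$, and $T[\eta(d)] \cong y_\C(d)$, whose restriction $u^\ast y_\C(d) \cong y_\D(d)$ since $\D$ is full and downward closed (so the representable presheaf on $\C$ restricts to the representable on $\D$). For the multiplication, given $t \in T_\D(1)_d$ and $f\colon T_\D[t] \to T_\D(1)$, I need an operation $\mu_\D(t,f) \in T_\D(1)_d$. The key point is that a map $f\colon u^\ast T[t] \to u^\ast T(1)$ in $\dh$ corresponds under the adjunction, together with the fact that $T[t]$ arises from $\dh$, to a map $T[t] \to T(1)$ in $\ch$: indeed $T[t] = u_! u^\ast T[t]$, so $\Hom_\ch(T[t], T(1)) = \Hom_\ch(u_! u^\ast T[t], T(1)) \cong \Hom_\dh(u^\ast T[t], u^\ast T(1))$. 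Hence I set $\mu_\D(t,f) := \mu(t, \tilde f)$ where $\tilde f$ is the adjunct of $f$; this lies in $T(1)_d$, hence in $T_\D(1)_d$. I would then check $T_\D[\mu_\D(t,f)] = u^\ast T[\mu(t,\tilde f)] \cong u^\ast \colim_{x \in T[t]} T[\tilde f(x)]$, and since each $\tilde f(x) \in T(1)_{d'}$ for $d'$ the cell shape of $x$, which is in $\D$ by downward closure (as $x$ is a cell of $T[t]$ which arises from $\dh$), the arity $T[\tilde f(x)]$ again arises from $\dh$, so $u^\ast$ commutes with this colimit appropriately and we get $\colim_{x \in T_\D[t]} T_\D[\tilde f(x)]$.

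Finally I would verify the naturality, unit, and associativity equations for $(T_\D, \eta_\D, \mu_\D)$. Here the strategy is to reduce each equation to the corresponding equation for $T$ by observing that all the operations and arities involved are restrictions along $u^\ast$ of $T$-data whose arities arise from $\dh$, and that $u^\ast$ restricted to presheaves arising from $\dh$ is fully faithful (being essentially the inverse of $u_!$ on that subcategory). So any equation between morphisms of such presheaves in $\dh$ is equivalent to the same equation in $\ch$, which holds because $T$ is a monad. Cartesianness of $T_\D$ should follow similarly: the relevant naturality squares for $T_\D$ are obtained by applying $u^\ast$ to (restrictions of) those for $T$, and pullbacks in $\dh$ of diagrams of presheaves arising from $\dh$ agree with the pullbacks computed in $\ch$. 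The main obstacle I anticipate is bookkeeping around the colimit formula for $\mu$: one must confirm that $u^\ast$ genuinely preserves the specific colimit $\colim_{x \in T[t]} T[f(x)]$ — this is not a general fact about $u^\ast$ (which is a right adjoint and need not preserve colimits), but works here precisely because the diagram takes values in presheaves arising from $\dh$, on which $u^\ast$ and $u_!$ are mutually inverse equivalences, so the colimit can be computed in that subcategory. Making this precise, rather than the monad equations themselves, will be where the real care is needed.
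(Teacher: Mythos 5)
Your proposal is correct and follows essentially the same route as the paper: both exploit $\D$-gradedness to get $T_\D u^\ast X = u^\ast TX$ and hence $T_\D = u^\ast T u_!$, transporting the monad structure along $u_! \dashv u^\ast$. The paper states this in two lines, whereas you additionally verify the unit, multiplication, and arity isomorphisms at the level of the familial generators, including the (correct) observation that $u^\ast$ preserves the relevant colimits because they are taken in the essential image of $u_!$, where $u^\ast$ and $u_!$ are mutually inverse.
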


\begin{proof}
This follows from the straightforward observation that for any $X$ in $\ch$, $T_\D u^\ast X = u^\ast TX$. As $u^\ast$ is a right adjoint and $u^\ast u_! Y = Y$ for $Y$ in $\dh$, this shows that $T_\D Y = u^\ast T u_! Y$, so $T_\D$ is the transport of $T$ along the adjunction $u^\ast \vdash u_!$.
\end{proof}

\begin{ex}
The monad $\MC$ on $\widehat{\G_1}$ is $\G_0$-graded, as all of the arities of operations in $\MC(1)_0$ have no edges. The monad $\MC_{\G_0}$ is easily seen to be the free monoid monad on sets, which we write as simply $\MC_0$.
\end{ex}

$u^\ast$ also has a right adjoint $u_\ast : \dh \to \ch$, where $u_\ast X_c = \Hom_\dh(u^\ast y(c), X)$. When $d$ is in $\D$, $u_\ast X_d \cong X_d$, and otherwise $X_c$ has a single $c$-cell in every position where a $c$-cell could possibly be inserted into $u^\ast X$ (that is, for each map $u^\ast y(c) \to u^\ast X$).

\begin{prop}
When $T$ is $\D$-graded, $u^\ast : \ch \to \dh$ and $u_\ast : \dh \to \ch$ both lift to functors between $T$-algebras and $T_\D$-algebras.
\end{prop}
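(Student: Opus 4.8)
The plan is to realize each of $u^\ast$ and $u_\ast$ as a monad functor: a natural transformation $\alpha : T_\D u^\ast \Rightarrow u^\ast T$ compatible with the units and multiplications lifts $u^\ast$ to a functor from $T$-algebras to $T_\D$-algebras lying over $u^\ast$, sending an algebra $h : TA\to A$ to $T_\D u^\ast A \xrightarrow{\alpha_A} u^\ast TA \xrightarrow{u^\ast h} u^\ast A$; symmetrically, a compatible $\beta : Tu_\ast \Rightarrow u_\ast T_\D$ lifts $u_\ast$ to a functor from $T_\D$-algebras to $T$-algebras over $u_\ast$, sending $k : T_\D B\to B$ to $Tu_\ast B \xrightarrow{\beta_B} u_\ast T_\D B \xrightarrow{u_\ast k} u_\ast B$. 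So the task reduces to producing $\alpha$ and $\beta$ and checking the monad-functor axioms; functoriality in the algebra is then just naturality of $\alpha$ (resp. $\beta$).

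For $\alpha$ there is nothing new to build. The proof of \cref{restriction_monad} identifies $T_\D$ with the transport $u^\ast T u_!$, so $T_\D u^\ast X = u^\ast T u_! u^\ast X$, and applying $u^\ast T$ to the counit $u_! u^\ast X \to X$ of $u_! \dashv u^\ast$ gives a map $\alpha_X : T_\D u^\ast X \to u^\ast TX$ which is an isomorphism precisely because $T$ is $\D$-graded --- this is the computation already in that proof, that the arities sitting over $\D$-cells of $T(1)$ see no difference between $X$ and its sub-presheaf of $\D$-cells. Compatibility of $\alpha$ with units and multiplications then follows from naturality of $\eta^T$ and $\mu^T$ together with the triangle identities of $u_! \dashv u^\ast$, once the transport formulas for $\eta^{T_\D}$ and $\mu^{T_\D}$ are unwound; granting this, the unit and associativity axioms of the $T_\D$-algebra $u^\ast h \circ \alpha_A$ are obtained by applying $u^\ast$ to, and rewriting with $\alpha$, the corresponding axioms for $h$.

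For $\beta$ I would bring in the further adjunction $u^\ast \dashv u_\ast$. Since $u$ is a full subcategory inclusion, $u_!$ and $u_\ast$ are both fully faithful, so $u^\ast u_! \cong \id_\dh \cong u^\ast u_\ast$; hence $u^\ast T u_\ast B \cong T_\D u^\ast u_\ast B \cong T_\D B$ naturally in $B$, and transposing the composite $u^\ast T u_\ast B \cong T_\D B \xrightarrow{k} B$ across $u^\ast \dashv u_\ast$ yields the structure map $Tu_\ast B \to u_\ast B$ we want. Abstractly this structure is $u_\ast k$ precomposed with the mate $\beta$ of $\alpha^{-1}$ along $u^\ast \dashv u_\ast$; because $\alpha$ is invertible, this mate is again a monad-functor structure --- the doctrinal-adjunction form of the mate calculus --- so $(u_\ast,\beta)$ lifts to the asserted functor on algebras.

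The only genuine input is \cref{restriction_monad} together with $\D$-gradedness; the rest is formal manipulation of the two adjunctions. The step I would handle most carefully is the compatibility of $\alpha$ with the monad structures (equivalently, a direct check of the unit/associativity axioms for $u^\ast A$): this is where one must read \cref{restriction_monad} as a statement about the \emph{monad} $T_\D$, not merely its underlying endofunctor. If one prefers to stay elementary and avoid the mate calculus, the $T$-algebra axioms for $u_\ast B$ can be verified by hand, reducing them to those for $k$ via the triangle identities of $u^\ast \dashv u_\ast$ and the already-established compatibilities of $\alpha$.
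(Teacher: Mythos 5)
Your argument is correct, and the $u^\ast$ half coincides with the paper's: both rest on the identity $T_\D u^\ast X = u^\ast TX$ for $\D$-graded $T$ (your $\alpha$ is in fact an equality in the paper's formulation, so the monad-functor axioms are immediate from the transport description in \cref{restriction_monad}). For the $u_\ast$ half the underlying construction is also the same --- the paper likewise transposes the desired structure map across $u^\ast \dashv u_\ast$, and your composite $u^\ast T u_\ast B \cong T_\D u^\ast u_\ast B \cong T_\D B \xrightarrow{k} B$, unwound at a component $c$, reproduces exactly the paper's recipe (restrict a $c$-cell of $Tu_\ast B$ along each $i : d \to c$, compose the resulting $T_\D[i^\ast t] \to B$ by $k$, and assemble the unique $c$-cell of $u_\ast B$ with that boundary). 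Where you genuinely diverge is in how the algebra axioms are discharged: the paper verifies them by hand, leaning on the concrete fact that $u_\ast B$ has exactly one $c$-cell over each boundary, whereas you outsource the verification to doctrinal adjunction, taking the mate of the invertible $\alpha^{-1}$ along $u^\ast \dashv u_\ast$ to get a lax monad-morphism structure $\beta$ on $u_\ast$. Your route buys a cleaner, fully formal proof that also makes functoriality on algebra morphisms automatic; the paper's buys an explicit description of the lifted structure map (useful for the examples that follow, e.g.\ the indiscrete monoidal category on a monoid) at the cost of an ``equations follow from uniqueness'' check. The one point to make explicit if you write this up is the claim that $\alpha$ is a monad-functor structure, i.e.\ that the transported unit and multiplication of $T_\D$ really are the ones the paper assigns via the representing family $(u^\ast T(1), u^\ast T[-])$; this is true but is exactly the content of reading \cref{restriction_monad} as a statement about monads rather than endofunctors, as you note.
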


\begin{proof}
As noted in the proof above, for a $T$-algebra $X$ we have $T_\D u^\ast X = u^\ast TX$, so the $T_\D$-algebra structure on $u^\ast X$ is given by applying $u^\ast$ to the structure map $TX \to X$ and the algebra equations follow easily.

Given a $T_\D$-algebra $Y$, we need to define a structure map $Tu_\ast Y \to u_\ast Y$, which for each $c$ in $\C$ unwinds to
$$\coprod_{t \in T(1)_c} \Hom_\ch(T[t],u_\ast Y) \to (u_\ast Y)_c.$$ 
Applying the adjunction $u_\ast \vdash u^\ast$ equates this map with one of the form
$$\coprod_{t \in T(1)_c} \Hom_\dh(u^\ast T[t], Y) \to (u_\ast Y)_c.$$
When $c$ is in $\D$, each $u^\ast T[t] = T_\D[t]$ and $(u_\ast Y)_c = Y_c$, so this map can be chosen to be the $c$-component of the of the $T_\D$-structure map on $Y$. When $c$ is not in $\D$, then any map $u^\ast T[t] \to Y$ precomposes with maps of the form $T_\D[i^\ast t] = u^\ast T[i^\ast t] \to u^\ast T[t]$ for $i : d \to c$ in $\C$ and $d$ in $\D$. This composite map $T_\D[i^\ast t] \to Y$ can be composed by the algebra structure into a $d$-cell in $Y$, and together (ranging over the different possible such $i$) these cells in $Y$ form the boundary of a potential $c$-cell in $u_\ast Y$. But there is exactly one $c$-cell in $u_\ast Y$ with that boundary, so the composite of the map $u^\ast T[t] \to Y$ in $(u_\ast Y)_c$ is uniquely determined. The algebra equations for this structure map then derive from those for the $T_\D$-algebra $Y$ and the uniqueness of $c$-cells in $u_\ast Y$ relative to their boundaries.
\end{proof}

\begin{rem}
While $u^\ast$ as a functor between categories of algebras also has a left adjoint, it is not given by $u_!$ on the underlying presheaves unless there is no $t \in T(1)_c$ for $c$ not in $\D$ such that $T[t]$ arises from $\dh$. Then for each such $t$ and map $T[t] \to Y$, a $c$-cell must be added to $u_! Y$ in order to define an algebra structure.
\end{rem}

\begin{ex}
$u_\ast : \widehat{\G_0} \to \widehat{\G_1}$ sends a set to the indiscrete graph on that set, with a single edge in each direction between any pair of vertices. As functors on algebras for $\MC$ and $\MC_0$: 
\begin{itemize}
	\item $u^\ast$ sends a strict monoidal category to its underlying monoid of objects
	\item $u_\ast$ sends a monoid to the indiscrete category on its underlying set, with the unique monoidal structure induced by the monoid structure on the objects. In particular, for the unique morphisms $a \to b$ and $a' \to b'$, there is a unique morphism $a \otimes a' \to b \otimes b'$, so the choice of their product is determined.
\end{itemize}
\end{ex}

\begin{ex}
Similarly, $\fduc_0$ is the free duoid monad on sets, the restriction functor $u^\ast$ on algebras sends a duoidal category to its underlying duoid, and $u_\ast$ sends a duoid to the indiscrete category on its objects, which inherits a canonical duoidal structure (albeit one that arises from a symmetric monoidal groupoid).
\end{ex}

\begin{ex}
Let $\G_1 \vee \G_1$ denote the full subcategory of $\G_1 \times \G_1$ spanned by $(0,0),(1,0),(0,1)$. The free double category monad $\fdc$ on double graphs is $(\G_1 \vee \G_1)$-graded, where presheaves on $\G_1 \vee \G_1$ are graphs with a single set of vertices but two distinct types of edges. This is because the operations in $\fdc(1)$ that output vertices or edges of either type do not have any squares in their arities, which are all strings of edges. The restriction of $\fdc$ to $\G_1 \vee \G_1$ from \cref{restriction_monad} is a monad whose algebras are pairs of categories with the same objects, and the restriction functor on algebras merely forgets the squares of a double category retaining only the underlying pair of categories.
\end{ex}

\begin{ex}
$\G_1 \times \G_1$ also has a full subcategory $\G_1 \times 0$ spanned by $(0,0),(1,0)$, isomorphic to $\G_1$. For this subcategory the ``vertical'' restriction of $\fdc$ is the free category monad on graphs, and the restriction functor on algebras sends a double category to its vertical category. Given a category $A$, $u_\ast A$ is the double category with a unique horizontal arrow between each pair of objects and a unique square between each pair of (now vertical) arrows in $A$.
\end{ex}

\section{$T$-Multicategories}\label{sec:multicat}

We now define $T$-multicategories and discuss their relationship with $T$-algebras. More details and references can be found in \cite{LeinsterEnrichment} and \cite{leinster}. Fix a familial monad $T$ on $\ch$.

\begin{defn}\label{def:multicat}
A $T$-multicategory $V = (V_0,V_1)$ is a span in $\ch$ of the form
\bctikz
& \ar{dl}[swap]{dom} V_1 \ar{dr}{cod} \\
TV_0 & & V_0
\ectikz
equipped with unit and multiplication maps (fixing $X$ and $TX$) from the following two spans
\bctikz
& \ar{dl}[swap]{\eta} V_0 \ar[equals]{dr} \\
TV_0 & & V_0
\ntikz
& \ar{dl}[swap]{\mu \circ T(dom) \circ \pi_1} TV_1 \times_{TV_0} V_1 \ar{dr}{cod \circ \pi_2} \\
TV_0 & & V_0
\ectikz
satisfying unit and associativity equations. 
\end{defn}

Unwinding this definition, a $T$-multicategory consists of:
\begin{itemize}
	\item $V_0$ in $\ch$, whose elements in $(V_0)_c$ can be regarded as $c$-cells
	\item for each $c$ in $\C$ and $t \in T(1)_c$, cells (in $(V_1)_c$) resembling ``arrows'' from a map $T[t] \to V_0$ to a $c$-cell in $V_0$, which we call $t$-arrows
	\item for each $i : c' \to c$ in $\C$ and $t \in T(1)_c$, every $t$-arrow restricts along $i$ to a $T(1)_i t$-arrow. There is hence a canonical map $V_1 \to T(1)$ in $\ch$ sending all $t$-arrows to $t \in T(1)_c$
	\item for each $c$ in $\C$ and $a \in (V_0)_c$, an identity $\eta(c)$-arrow from $a$ to $a$
	\item for each $c$ in $\C$, $t \in T(1)_c$, $f : T[t] \to T(1)$, $t$-arrow $\alpha$, and $\beta: T[t] \to V_1$ commuting over $T(1)$, a $\mu(t,f)$-arrow from the combined domains of $\beta$ in $X$ to the codomain of $\alpha$
	\item these identities and composites satisfy unit and associativity equations
\end{itemize}

For convenience, for $c$ in $\C$ we write $V^c$ for $(V_0)_c$ and for $t \in T(1)_c$ we write $V^t$ for the subset of $(V_1)_c$ containing the $t$-arrows. $V$ can in fact be regarded as a presheaf over a category $\C^{+T}$ which is an algebra for a familial monad $T^+$ on $\widehat{\C^{+T}}$, which we discuss in \cref{sec:enrichedmulticats}.

\begin{prop}
For $u : \D \to \C$ a restriction of cell shapes, when $T$ is $\D$-graded a $T$-multicategory $V = (V_0,V_1)$ restricts to a $T_\D$-multicategory $u^\ast V = (u^\ast V_0, u^\ast V_1)$.
\end{prop}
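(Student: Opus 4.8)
The plan is to obtain the $T_\D$-multicategory structure on $(u^\ast V_0, u^\ast V_1)$ by applying the restriction functor $u^\ast : \ch \to \dh$ to all of the defining data of $V$. The one substantive input is the identity $u^\ast T X = T_\D u^\ast X$, natural in $X \in \ch$, established in the proof of \cref{restriction_monad} from $\D$-gradedness; together with the fact that $u^\ast$ is computed pointwise (in particular is a right adjoint, hence preserves pullbacks), this guarantees that applying $u^\ast$ to each of the three spans in \cref{def:multicat} yields a span of precisely the shape required for a $T_\D$-multicategory over $T_\D u^\ast V_0$ and $u^\ast V_0$. Concretely, $u^\ast(dom) : u^\ast V_1 \to u^\ast T V_0 = T_\D u^\ast V_0$ and $u^\ast(cod) : u^\ast V_1 \to u^\ast V_0$ form the underlying span of $u^\ast V$, and the pullback $T V_1 \times_{T V_0} V_1$ is carried to $T_\D u^\ast V_1 \times_{T_\D u^\ast V_0} u^\ast V_1$.

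Next I would verify that $u^\ast$ transports the unit and multiplication spans, and the unit and multiplication maps, of $V$ to those of $u^\ast V$. This amounts to the statement that, under the identification $u^\ast T = T_\D u^\ast$, the functor $u^\ast$ is compatible with the monad structures: $u^\ast(\eta^T_X)$ corresponds to $\eta^{T_\D}_{u^\ast X}$, $u^\ast(\mu^T_X)$ to $\mu^{T_\D}_{u^\ast X}$, and $u^\ast(T f)$ to $T_\D(u^\ast f)$ on morphisms. This is exactly what makes $T_\D$ the transport of $T$ along $u^\ast$, and it is the same compatibility already used to lift $u^\ast$ to a functor from $T$-algebras to $T_\D$-algebras in the proposition above; I would either cite that or spell it out, the point being that naturality of $\eta^T$ and $\mu^T$ against the counit $u_! u^\ast \Rightarrow \mathrm{id}$ together with the triangle identities reduces each required equality to the observation — a further use of $\D$-gradedness — that the comparison map $u^\ast(T\varepsilon)$ restricts to the identity on $d$-cells for $d$ in $\D$. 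Granting this, $u^\ast$ sends the unit map $V_0 \to V_1$ and the multiplication map $T V_1 \times_{T V_0} V_1 \to V_1$ of $V$ to maps of the correct $T_\D$-spans, and the unit and associativity axioms for $u^\ast V$ follow by applying the functor $u^\ast$ to the corresponding equalities of maps of presheaves holding in $V$.

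I expect the bookkeeping of the middle step to be the only genuine obstacle: pinning down that the chosen isomorphism $u^\ast T \cong T_\D u^\ast$ intertwines the monad multiplications, which in the iterated form $u^\ast T T \cong T_\D T_\D u^\ast$ passes through the counit $u_! u^\ast \Rightarrow \mathrm{id}$ and needs $\D$-gradedness to collapse the intervening comparison maps to identities. Once that is in hand, the span shapes, the preservation of pullbacks, and the verification of the axioms are all formal, and in fact essentially duplicate the computation carried out in the proof that $u^\ast$ lifts to $T_\D$-algebras.
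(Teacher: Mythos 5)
Your argument is correct, but it is pitched at a different level than the paper's. The paper disposes of this proposition in one sentence by appealing to the unwound, element-level description of a $T$-multicategory: since $T$ is $\D$-graded, the arities $T[t]$ for $t \in T(1)_d$ contain only $\D$-shaped cells, so the identity and composition operations on $d$-cells and $t$-arrows never reference anything that $u^\ast$ forgets, and the axioms restrict verbatim. You instead work at the level of the defining spans, transporting the whole structure along the identification $u^\ast T = T_\D u^\ast$ and using that $u^\ast$ preserves pullbacks. Both routes are sound, and yours is arguably the more rigorous one: the one point you rightly flag as needing care --- that the isomorphism $u^\ast T \cong T_\D u^\ast$ intertwines the units and multiplications, which in the iterated case passes through the counit $u_! u^\ast \Rightarrow \id$ --- does go through, precisely because $\D$-gradedness ensures that maps out of $T[t]$ for $t \in T(1)_d$ only see the $d$-cells of their target, so $u^\ast(T\varepsilon)$ is invertible in those degrees even though $T\varepsilon$ itself is not. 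The trade-off is that your version requires establishing this monad-map compatibility explicitly (essentially showing $u^\ast$ is a lax monad morphism that is strong in the relevant degrees), whereas the paper's version gets the same conclusion for free from the combinatorial description; in exchange, your argument would generalize more readily to situations where one does not want to unwind the definition, and it makes visible exactly where $\D$-gradedness is used.
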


\begin{proof}
This is a straightforward observation from the unwinded definition above, as if $T$ is $\D$-graded the identities and composites of $d$-cells in $V$ and $t$-arrows for $d$ in $\D$ and $t \in T(1)_d$ are unaffected by forgetting the higher dimensional cell shapes and operations.
\end{proof}

\begin{ex}
Recall (from, for instance, \cite[Section 2.1]{LeinsterEnrichment}) that a $\fc$-multicategory is what is sometimes called a virtual double category: a structure resembling a double category but without horizontal composition of horizontal morphisms or squares. Instead of horizontal composition, squares can have a string of adjacent horizontal morphisms in the top row as in \eqref{vdoub} on the right, and many such horizontally adjacent squares can be composed vertically above a single square: in \eqref{vdoub}, the arrangement on the left composes into a 3-to-1 square as on the right:
\begin{equation}\label{vdoub}\begin{tikzcd}[column sep={40,between origins}]
\mdot \dar \ar[slash]{r} & \mdot \ar[slash]{r} & \mdot \dar \ar[slash, ""{name=D, below}]{r} & \mdot \dar & & \mdot \ar[slash]{r} \ar{dd} & \mdot \ar[slash,""{name=F, below}]{r} & \mdot \ar[slash]{r} & \mdot \ar{dd} \\
\mdot \dar \ar[slash, ""{name=A, above}]{rr} & {\color{white}{\mdot}} \ar[phantom, ""{name=B, below}]{r} & \mdot \ar[slash, ""{name=E, above}]{r} & \mdot \dar \\
\mdot \ar[slash, ""{name=C, above}]{rrr} & & & \mdot & & \mdot \ar[slash,""{name=G, above}]{rrr} & & & \mdot
\arrow[Rightarrow,shorten=4,from=1-2,to=A]
\arrow[Rightarrow,shorten=5,from=B,to=C]
\arrow[Rightarrow,shorten=5,from=D,to=E]
\arrow[Rightarrow,shorten=5,from=F,to=G]
\end{tikzcd}\end{equation}

As an $\fc$-multicategory, the $\O$- and $\I$-cells are the objects and horizontal morphisms of the virtual double category, the $\eta(\O)$-arrows are the vertical morphisms, the $n$-arrows for $n \in \fc(1)_1$ are the $n$-to-1 squares, and the composites and identities agree with the composition and vertical identities in a virtual double category. Under both descriptions of this structure, the unit and associativity equations are the same.
\end{ex}

\begin{rem}
To make the choice of directions unambiguous when there are many different types of arrows present in a $T$-multicategory, we will always say that the $t$-arrows point in the ``forward'' direction. So in a virtual double category, the vertical morphisms and $n$-to-1 squares all point in the forward direction in this terminology.
\end{rem}

\begin{ex}
An $\MC$-multicategory consists of the following:
\begin{itemize}
	\item A graph $V_0$ with vertex set $V^\O$ and edge set $V^\I$
	\item For each $n \in \MC(1)_0$, a set $V^n$ of arrows from a list of $n$ vertices in $V^\O$ to a single vertex. These arrows resemble the many-to-1 arrows in a multicategory
	\item For each $(n;m_1,...,m_n) \in \MC(1)_1$, a set $V^{n;m_1,...,m_n}$ of arrows from an arrangement of paths of length $m_i$ in $V_0$ for $1 \le i \le n$ to a single edge. These arrows have source and target $n$-to-1 arrows in $V^n$. When $n=1$, these resemble the many-to-1 squares in a virtual double category, and in general we call them ``many-to-1 multi-squares''
	\item For each $x \in V^\O$, an identity arrow from $x$ to $x$ in $V^1$
	\item For each edge $x \in V^\I$, an identity arrow from $x$ to $x$ in $V^{1;1}$
	\item Compositions of many-to-1 arrows between vertices that form a multicategory
	\item Arrows from arrangements of paths to an edge have compositional structure similar to that of a virtual double category, which respects the multicategory structure on their sources and targets
\end{itemize}

In other words, it consists of a plain multicategory, a set of edges between the objects of that multicategory, and a virtual-double-category-like structure on those edges. When those edges have the structure of a category that extends to the ``squares,'' this looks like the double multicategories of \cite[Definition 3.10]{doublemulti}.

Given such an $\MC$-multicategory $V$, $u^\ast V$ is its underlying multicategory on the vertices $V^\O$, which is precisely a $\MC_0$-multicategory (\cite[Example 4.2.7]{leinster}).
\end{ex}

\begin{ex}
An $\fdc$-multicategory $V$ consists of:
\begin{itemize}
	\item A double graph $V_0$
	\item A virtual double category $V_h$ whose underlying graph is the horizontal underlying graph of $V_0$
	\item A virtual double category $V_v$ whose underlying graph is the vertical underlying graph of $V_0$
	\item ``(n,m)-arrows'' from an $n \times m$ grid of squares in $V_0$ to a single square in $V_0$, whose vertical source and target are squares in $V_h$ and whose horizontal source and target are squares in $V_v$
	\item Identities and composites of these 3-dimensional arrows analogous to those in a virtual double category, respecting the identities and composites in $V_h,V_v$ and satisfying the usual unit and associativity axioms
\end{itemize}
It would also make sense to call this a ``virtual triple category.''
\end{ex}

We now define $T$-multifunctors and transformations between them.

\begin{defn}
A \emph{$T$-multifunctor} is a map of spans determined by maps on $V_0$ and $V_1$, which commutes with identities and composites.
\end{defn}

Concretely, for $T$-multicategories $V,V'$, the map of spans amounts to maps $V^c \to V'^c$ and $V^t \to V^t$ natural in $c$ and $t$.

\begin{defn}
For $T$-multifunctors $F,G : V \to V'$, a \emph{$T$-multinatural transformation} (or simply \emph{transformation}) is a natural assignment of, for all $t$-arrows in $V$ from $a : T[t] \to V_0$ to $b \in V^c$, a $t$-arrow in $V'$ from $Fa : T[t] \to V_0 \to V'_0$ to $Gb \in V'^c$. Here natural means that these arrows are closed under precomposition with arrows under $F$ and postcomposition with arrows under $G$.
\end{defn}

\subsection{Trivial $T$-multicategories}

In \cref{sec:enrichment}, we define $T$-algebras enriched in a $T$-multicategory $V$. In order to recover the classical definitions of enrichment of categories and other fundamental categorical structures, we describe in the next few subsections various specializations of the notion of $T$-multicategory that recover more familiar bases of enrichment, such as monoidal categories. This process is a generalization of \cite[Table 2.1]{LeinsterEnrichment}, where we provide definitions of the conditions ``vertically discrete'' (which we call 0-discrete), ``vertically trivial'' (which we call 0-trivial), ``representable,'' and ``uniformly representable'' listed in that table for the case when $T$ is familial.  

When $\fc$-multicategories are specialized to plain multicategories (see \cite[Example 2.1.1.v]{LeinsterEnrichment}), it is assumed that the sets of objects (or in our terminology, vertices) and vertical arrows (here $\O$-arrows) have only one element. This reduces the dimension of the structure by allowing the set of horizontal arrows (here $I$-cells), each of which has the unique object as its source and target, to instead be treated like objects and the many-to-one squares (here $n$-arrows) like many-to-one arrows. This process of imposing that there is only one cell and one arrow of certain lower-dimensional shapes in a $T$-multicategory applies broadly to specializing $T$-multicategories to more classical or classically-inspired bases of enrichment.

\begin{defn}
For $u : \D \to \C$ a restriction of cell shapes and $T$ a $\D$-graded familial monad on $\ch$, a $T$-multicategory $V$ is \emph{$\D$-trivial} if for each $d$ in $\D$, there is a unique $d$-cell and for each $t \in T(1)_d$, there is a unique $t$-arrow. When $V$ is $\C$-trivial, we say that $V$ is simply \emph{trivial}.
\end{defn}

It is important to assume that $T$ is $\D$-graded, as otherwise for $t \in T(1)_d$ the arity $T[t]$ could contain $c$-cells for $c$ not in $\D$, so it would not be clear what the source diagram of the unique $t$-arrow in a $\D$-trivial $T$-multicategory should be. 

\begin{ex}
For the restriction $\G_0 \to \G_1$, a $\G_0$-trivial $\fc$-multicategory, which we call simply 0-trivial, has a unique 0-cell and $\eta(\O)$-arrow. The only remaining pieces of data are the 1-cells and $n$-arrows, which form the objects and $n$-to-1 arrows of a plain multicategory as discussed above.
\end{ex}

\begin{ex}
A 0-trivial $\MC$-multicategory is quite similar to a 0-trivial $\fc$-multicategory. In place of the $n$-to-1 squares in the latter that are interpreted as $n$-to-1 arrows in a plain multicategory, in a 0-trivial $\MC$-multicategory $V$ the set $V^{n;m_1,...,m_n}$ of $(m_1,...,m_n)$-to-1 multi-squares can be interpreted as $(m_1+\cdots+m_n)$-to-1 arrows. Unlike the multi-arrows in a plain multicategory however, the many 1-cells (interpreted as objects) are arranged arranged in $n$ different rows rather than 1, and such a multi-arrow can only be precomposed with multi-arrows into each domain object such that the arrows into objects of the same row each have the same number of rows in their domains.
\end{ex}

\begin{ex}
When $u : \D \to \C$ is a restriction of cell shapes away from a choice $e = \{e_k\}$ of endpoint objects (\cref{endpoints}) and $T$ is a $\D$-graded familial monad on $\ch$, $\D$-trivial $T$-multicategories are similar to plain multicategories: the objects are $e_k$-cells for all of the chosen endpoints $e_k$ in $\C$, and the many-to-1 arrows go from $e_k$-cells typed according to the $e$-cells in $T[t]$ to an $e_{k'}$-cell for $t \in T(1)_{e_{k'}}$. 

We say $T$ is \emph{finitary over $e$} if $T$ is $\D$-graded and for all $t \in T(1)_e$, $T[t]_e$ is finite.
By \cite[Corollary C.4.8]{leinster}, when $T$ is finitary over $e$ any symmetric multicategory forms a $\D$-trivial $T$-multicategory, albeit in a non-canonical way.
\end{ex}

\subsection{Discrete $T$-multicategories}

For a $T$-algebra $h : TA \to A$, there is a corresponding $T$-multicategory $MA$ given by the span
\bctikz
& \ar[equals]{dl} TA \ar{dr}{h} \\
TA & & A.
\ectikz
This $T$-multicategory has a single $t$-arrow from $a$ to $h(a)$ for each $c$ in $\C$, $t \in T(1)_c$, and $a : T[t] \to A$, with identities and composites uniquely determined by their domains.

This construction can be interpreted as replacing the algebraic structure of $A$ with more geometric ``witnesses'' in $MA$ in the form of the unique arrow from $a$ to $h(a)$ for each composable arrangement $a : T[t] \to A$. The algebraic structure in $MA$ is then comparatively simple, merely recognizing that witnesses to a nested composition in $A$ assemble into a witness of the corresponding total composition.

$T$-multifunctors $MA \to V$, for $A$ a $T$-algebra, tend to behave like lax functors out of $A$ when $A$ is interpreted as a category-like structure.

\begin{ex}
Given a strict monoidal category $A$, $MA$ is the following $\MC$-multicategory:
\begin{itemize}
	\item The vertices and edges are those of the underlying graph of $A$
	\item There is an $n$-to-1 arrow from $a_1,...,a_n$ to $b$ precisely when $b = a_1 \otimes \cdots \otimes a_n$
	\item There is an arrow from the paths 
$$a_{1,1} \to \cdots \to a_{1,m_1} \qquad \cdots \qquad a_{n,1} \to \cdots \to a_{n,m_n}$$
to the edge $b \to b'$ precisely when $b = a_{1,1} \otimes \cdots \otimes a_{n,1}$, $b = a_{1,m_1} \otimes \cdots \otimes a_{n,m_n}$, and $b \to b'$ is the product of the composites of the paths
	\item Identities are given by the unique arrow from $a$ to $a$, where $a$ is any vertex or edge
	\item Each arrangement of paths has a unique outgoing arrow, which determines arrow composition. This composition is possible because of the algebra structure on $A$ which ensures every arrangement has a composite
\end{itemize}

A $\MC$-multifunctor from $MA$ to $V$ then consists of a vertex $v_a$ of $V$ for each object of $A$, and $n$-to-1 cells from $v_{a_1},...,v_{a_n}$ to $v_b$ whenever $b = a_1 \otimes \cdots \otimes a_n$ (and similar structure for paths of morphisms). This is the sense in which this resembles a lax monoidal functor, as a statement like ``$v_b = v_{a_1} \otimes \cdots \otimes v_{a_n}$'' may not even make sense in $V$ but there can be an arrow from a list of objects to another object. Below we describe conditions on $V$ such that these statements actually do make sense.

However, $\MC$-multifunctors $MA \to MA'$ precisely correspond to strict monoidal functors $A \to A'$, as the action on arrows ensures that the assignment on objects and morphisms respects composition and products. 
\end{ex}

Just like the notion of triviality of a $T$-multicategory, $T$-algebras are characterized among $T$-multicategories by a uniqueness condition on the arrows of $V$.

\begin{defn}
For $u : \D \to \C$ a restriction of cell shapes, a $T$-multicategory $V$ is \emph{$\D$-discrete} if for each $d$ in $\D$, $t \in T(1)_d$, and $a : T[t] \to V_0$, there is a unique $t$-arrow with domain $a$. When $V$ is $\C$-discrete, we say that $V$ is simply \emph{discrete}.
\end{defn}

The name ``discrete'' corresponds to Leinster's notion of ``vertically discrete'' $\fc$-multicategories, emphasizing that the forward arrows to $\D$-cells in a discrete $T$-multicategory only encode $T_\D$-algebra structure (if there is any, unlike $\fc_0 = \id$), rather than additional data.

\begin{prop}
$V$ is $\D$-discrete if and only if $u^\ast V \cong MA$ for some $T_\D$-algebra $A$.
\end{prop}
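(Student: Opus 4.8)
The plan is to identify $\D$-discreteness of $V$ with the single condition that the \emph{domain leg} of the span $u^\ast V$ is invertible, and then to invoke the standard correspondence between $T_\D$-multicategories with invertible domain leg and $T_\D$-algebras. (Here, as for $u^\ast V$ to even be a $T_\D$-multicategory, $T$ is assumed $\D$-graded.)

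\emph{Step 1: reformulating discreteness.} By (the proof of) \cref{restriction_monad} we have $T_\D u^\ast V_0 = u^\ast T V_0$, so for $d$ in $\D$ the domain map $dom \colon u^\ast V_1 \to T_\D u^\ast V_0$ in degree $d$ is the function $(V_1)_d \to (TV_0)_d = \coprod_{t \in T(1)_d} \Hom_\ch(T[t], V_0)$. Using the canonical map $V_1 \to T(1)$ to write $(V_1)_d = \coprod_{t \in T(1)_d} V^t$, and observing that $dom$ carries $V^t$ into the summand indexed by $t$, this function is a bijection precisely when for every $t \in T(1)_d$ the assignment of a $t$-arrow to its domain is a bijection $V^t \to \Hom_\ch(T[t], V_0)$ — which is verbatim the statement that $V$ is $\D$-discrete. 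Hence $V$ is $\D$-discrete if and only if $dom \colon u^\ast V_1 \to T_\D u^\ast V_0$ is an isomorphism.

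\emph{Step 2: from an invertible domain leg to an algebra.} Assuming $dom$ invertible, set $A := u^\ast V_0$ and $h := cod \circ dom^{-1} \colon T_\D A \to A$. Transporting the span $u^\ast V$ along $dom^{-1}$ exhibits it as the span $T_\D A \xleftarrow{=} T_\D A \xrightarrow{h} A$ underlying $MA$. That $(A,h)$ satisfies the $T_\D$-algebra axioms, and that the unit and multiplication of $u^\ast V$ are carried to those of $MA$, is then formal: with $dom$ invertible the unit span-morphism $V_0 \to V_1$ is forced to be $dom^{-1} \circ \eta_A$, and its lying over $V_0$ reads $h \circ \eta_A = \id_A$; likewise the pullback $T_\D V_1 \times_{T_\D V_0} V_1$ is identified with $T_\D^2 A$, the multiplication span-morphism is forced to correspond to $\mu_A$, and its two legs reading correctly unwind to $h \circ T_\D h = h \circ \mu_A$. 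The resulting $T_\D$-multicategory structure then has all identities and composites uniquely determined by their domains, which is exactly $MA$. (Alternatively one simply cites the general fact, as in \cite{leinster}, that a $T$-multicategory with invertible domain leg is the same thing as a $T$-algebra.) For the converse, if $u^\ast V \cong MA$ for a $T_\D$-algebra $A$ then the domain leg of $MA$ is the identity of $T_\D A$, hence invertible, so that of $u^\ast V$ is invertible, and by Step 1 $V$ is $\D$-discrete.

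The only real work is in Step 2 — confirming that the unit and multiplication data already carried by $u^\ast V$ genuinely match those of $MA$ and force the algebra axioms on $h$. This is a routine chase through the relevant pullbacks and span-morphism conditions, and is bypassed entirely if one quotes Leinster's identification of discrete $T$-multicategories with $T$-algebras.
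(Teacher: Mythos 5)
Your proposal is correct and follows essentially the same route as the paper: both identify $\D$-discreteness with invertibility of the domain leg of $u^\ast V$, take $A = u^\ast V_0$ with structure map $cod \circ dom^{-1}$, and note the converse is immediate. You simply spell out the verification of the algebra axioms (which the paper dismisses as ``easy to see''), and that chase through the unit and multiplication spans is carried out correctly.
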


\begin{proof}
If $V$ is $\D$-discrete, $u^\ast V_1$ is isomorphic to $T_\D A$, and it is easy to see that such a $T_\D$-multicategory is isomorphic to one of the form $MA$, where $A = u^\ast V_0$ and the codomain map 
$$T_\D u^\ast V_0 \cong u^\ast V_1 \to u^\ast V_0$$
provides the algebra structure. The converse is immediate from the definition.
\end{proof}

\begin{ex}
An $\MC$-multicategory $V$ is therefore discrete if it has the form $MA$ for $A$ a monoidal category, and $\G_0$-discrete  (or simply 0-discrete) if there is a monoid structure on its vertices and it has only unique $n$-to-1 arrows from $a_1,...,a_n$ to the product $a_1 \cdots a_n$.
\end{ex}

\subsection{Representable $T$-multicategories}

A $t$-arrow could be interpreted as a witness to the fact that its domain can compose into its codomain, as in the discrete $T$-multicategories, but in practice they often resemble something more like ``the composite of the domain maps into the codomain.'' We now make this latter interpretation precise by defining the corresponding condition on a $T$-multicategory. 

We now describe weaker conditions in which each $a$ can be the domain of many $t$-arrows, but with one or more still distinguished as witness(es) to composition.

\begin{defn}
In a $T$-multicategory $V$, for $c$ in $\C$, $t \in T(1)_c$, and $a : T[t] \to V_0$, a \emph{universal $t$-arrow for $a$} is a $t$-arrow $h_a$ with domain $a$ such that every $t$-arrow with domain $a$ factors uniquely as $h_a$ composed with an $\eta(c)$-arrow with domain $cod(h_a)$.
\end{defn}

This definition, as well as the following one, should be compared with universal arrows in representable multicategories as in \cite{Hermida}, which resemble the arrows from $a_1,...,a_n$ to $a_1 \cdots a_n$ in a discrete $T$-multicategory but in a setting where there may still be additional arrows out of the list $a_1,...,a_n$, as in the classical underlying multicategory of a monoidal category. 

\begin{defn}
For $u : \D \to \C$ a restriction of cell shapes, a $T$-multicategory $V$ is \emph{$\D$-representable} if:
\begin{itemize}
	\item for each $d$ in $\D$, $t \in T(1)_d$, and $a : T[t] \to V_0$, $a$ has a universal $t$-arrow
	\item universal arrows in $u^\ast V$ are closed under composition
\end{itemize}
$V$ is furthermore \emph{uniformly $\D$-representable} if each such $a$ is equipped with a choice of universal $t$-arrow $h_a$, closed under identities and composition. When $V$ is (uniformly) $\C$-representable, we say that $V$ is simply \emph{(uniformly) representable}.
\end{defn}

Note that identity arrows are automatically universal, and more generally universal arrows can be interpreted as a generalization of isomorphisms for $t$-arrows in $V$. Uniformly representable $T$-multicategories describe the situation of $t$-arrows corresponding to maps out of the composite of the domain, while representable $T$-multicategories provide a weaker version of this picture where an arrangement $a$ can have multiple valid composites, though all must be isomorphic.

\begin{prop}\label{representable_has_algebra}
A uniformly $\D$-representable $T$-multicategory $V$ induces a $T_\D$-algebra structure on $u^\ast V_0$.
\end{prop}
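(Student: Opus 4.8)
The plan is to define the $T_\D$-algebra structure map $h \colon T_\D u^\ast V_0 \to u^\ast V_0$ directly from the chosen universal arrows. For $d$ in $\D$, an element of $(T_\D u^\ast V_0)_d$ consists of an operation $t \in T_\D(1)_d = T(1)_d$ together with a map $a \colon T_\D[t] = u^\ast T[t] \to u^\ast V_0$ in $\dh$; since $T$ is $\D$-graded the arity $T[t]$ arises from $\dh$, so by the adjunction $u_! \dashv u^\ast$ this $a$ is the same datum as a map $T[t] \to V_0$ in $\ch$. I would set $h(t,a) = cod(h_a)$, the codomain of the chosen universal $t$-arrow for $a$. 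First one checks this is natural in $(t,a)$, hence a morphism of presheaves on $\D$: this comes down to the chosen universal arrows being stable under restriction along morphisms of $\D$ --- so that the restriction of $h_a$ along $i \colon d' \to d$ is again the chosen universal arrow for the restriction of $a$ --- together with naturality of the span leg $cod \colon V_1 \to V_0$.

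For the unit axiom, recall $T[\eta(d)] \cong y(d)$, so the unit of $T_\D$ (whose unit operation at $d$ is again $\eta(d) \in T(1)_d$, since $y(d)$ arises from $\dh$) sends a $d$-cell $a \in (V_0)_d$ to the pair $(\eta(d), a)$. Because the chosen universal arrows are closed under identities and there is a single chosen universal arrow for each domain, $h_a$ must be the identity $\eta(d)$-arrow on $a$, so $h(\eta(d), a) = a$.

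For the associativity axiom, fix $d$ in $\D$, an operation $t \in T(1)_d$, a map $f \colon T[t] \to T(1)$, and for each cell $x$ of $T[t]$ a map $\beta_x \colon T[f(x)] \to V_0$, natural in $x$; these glue along $T[\mu(t,f)] \cong \colim_{x \in T[t]} T[f(x)]$ into a single map $\beta \colon T[\mu(t,f)] \to V_0$. One side of the associativity square evaluates to $cod(h_\beta)$; the other first forms the map $b \colon T[t] \to V_0$ with $b(x) = cod(h_{\beta_x})$ and then evaluates to $cod(h_b)$. To identify the two, I would apply the multiplication of the $T$-multicategory $V$ to the $t$-arrow $h_b$ --- whose domain is $b$ --- together with the family $x \mapsto h_{\beta_x}$ of $f(x)$-arrows, which commutes with $f$ over $T(1)$ and whose codomains reassemble into $b = dom(h_b)$. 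The result is a $\mu(t,f)$-arrow whose domain is the combined domains $\beta$ and whose codomain is $cod(h_b)$; since the chosen universal arrows are closed under composition, this composite is the chosen universal arrow for $\beta$, i.e.\ it is $h_\beta$, and comparing codomains yields $cod(h_\beta) = cod(h_b)$, which is exactly the associativity equation.

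The step I expect to be the main obstacle is the last one: matching, term by term, the two composites in the $T_\D$-algebra associativity square with the single application of the $T$-multicategory multiplication above, so that the associativity axiom for $V$ gets transported into the associativity axiom for $h$. The remaining points --- restriction-stability of the chosen universal arrows (needed both for $h$ to be a presheaf map and for $x \mapsto h_{\beta_x}$ to be natural in $x$), and the verification that the unit of $T_\D$ and the colimit identification $T[\mu(t,f)] \cong \colim_x T[f(x)]$ interact correctly with all of the above --- should be routine consequences of the universal property and of \cref{restriction_monad}.
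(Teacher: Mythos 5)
Your proof is correct and follows essentially the same route as the paper's (one-sentence) argument: define $h$ by sending $a$ to $cod(h_a)$ and derive the unit and associativity axioms from the closure of the chosen universal arrows under identities and composition. You are in fact more careful than the paper, which silently assumes the restriction-stability of the choice $a \mapsto h_a$ needed for $h$ to be a presheaf morphism --- a point you rightly flag as required but left implicit by the definition of uniform representability.
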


\begin{proof}
For $t \in T(1)_d$, $h : T_\D u^\ast V_0 \to u^\ast V_0$ composes $a : T[t] \to V_0$ into the codomain of $h_a$, and these compositions respect $\eta$ and $\mu$ as the universal arrows $h_a$ are closed under identities and composition.
\end{proof}

\begin{ex}\label{representable_fc}
As in \cite[Examples 2.1.1.i and 2.1.1.ii]{LeinsterEnrichment}, an $\fc$-multicategory $V$ is representable when there is a horizontal composition operation on 1-cells and squares making $V$ into a pseudo-double category, and uniformly representable when it is a strict double category. In the strict case, the choice of universal $n$-arrows for each string of $n$ adjacent 1-cells determines their composition, while horizontal composition of $n$ squares is induced by composing the codomain 1-cells in this manner and then factoring the resulting $n$-to-1 square through the universal $n$-arrow out of the domain 1-cells. When $V$ is merely representable, any choice of universal $n$-arrow for each $n$ adjacent 1-cells suffices to define the composition, which will be associative and unital up to (1-arrow) isomorphism as the universal $n$-arrows out of a fixed choice of $n$ adjacent 1-cells are unique up to isomorphism.
\end{ex}

\begin{ex}
A uniformly representable $\MC$-multicategory is given by a strict monoidal double category: all of the $n$-to-1 arrows from $a_1,...,a_n$ to $b$ are given by vertical morphisms $a_1 \otimes \cdots \otimes a_n \to b$, and all of the arrows from an arrangement of paths to an edge are given by squares from the product of the composites of those paths to an edge.

A representable $\MC$-multicategory corresponds to a weak monoidal pseudo-double category, where there may be many universal arrows out of a list of vertices $a_1,...,a_n$ but all are canonically isomorphic, with similar behavior for composites and products of edges.
\end{ex}

\begin{ex}\label{doublemulticats}
The double multicategories of \cite[Definition 3.10]{doublemulti} don't fit neatly into these properties of a $\MC$-multicategory, because they are those which are ``uniformly representable in the category direction but not representable in the monoidal direction.'' In other words, they admit horizontal composition of 1-cells and multi-squares, but the multicategories of 0-cells and $n$-arrows, and of 1-cells and $(n;1,...,1)$-arrows do not arise from monoidal categories.

However, double multicategories are precisely the uniformly representable $\fm$-multicategories for $\fm$ the free plain multicategory monad on graphs with many-to-1 arrows. As double multicategories are defined as categories internal to plain multicategories, this is a straightforward example of the following characterization of (uniformly) representable $T$-multicategories.
\end{ex}

\begin{prop}\label{representable_structured}
Uniformly representable $T$-multicategories are equivalent to categories internal to $T$-algebras.
\end{prop}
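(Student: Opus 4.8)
The plan is to exhibit an equivalence of categories between uniformly representable $T$-multicategories (with the evident $T$-multifunctors between them) and internal categories in the category $\TA$ of $T$-algebras. First I would recall that an internal category in $\TA$ consists of $T$-algebras $C_0, C_1$ together with source, target, identity, and composition $T$-algebra maps $s,t : C_1 \rightrightarrows C_0$, $e : C_0 \to C_1$, $m : C_1 \times_{C_0} C_1 \to C_1$ satisfying the usual unit and associativity laws, where the pullback is taken in $\TA$ (it exists since $\TA$ is a presheaf category, $T$ being familial hence having Eilenberg–Moore category a presheaf category by \cite{representability}). The crux is that a uniformly representable $T$-multicategory packages exactly this data once we use the universal arrows to rigidify.

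In the forward direction, starting from a uniformly $\D$-representable $T$-multicategory $V = (V_0, V_1)$ with $\D = \C$, I would set $C_0 := V_0$ with its $T$-algebra structure from \cref{representable_has_algebra} (applied with $\D = \C$), and take $C_1$ to be the presheaf of $t$-arrows, i.e. $C_1 := V_1$ viewed via the map $V_1 \to T(1)$; more precisely $C_1$ should be the ``$\eta$-arrows'' presheaf carrying the sub-presheaf of $V_1$ on $\eta(c)$-arrows — wait, rather, the right choice is: since $V$ is representable, every $t$-arrow factors uniquely through a universal one followed by an $\eta(c)$-arrow, so the $t$-arrows for varying $t$ are reconstructible from the $\eta$-arrows, which form a presheaf $C_1$ on $\C$ (the source and target of an $\eta(c)$-arrow are both $c$-cells). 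The $T$-algebra structure on $C_1$ comes again from uniform representability: given $a : T[t] \to C_1$, one composes the $\eta$-arrows using the chosen universal arrows of $V$ for the domains and codomains, invoking closure of universal arrows under composition to get well-definedness and the algebra axioms. Then $s,t : C_1 \to C_0$ are domain and codomain (these are $T$-algebra maps by construction), $e : C_0 \to C_1$ sends a cell to its identity $\eta$-arrow, and $m$ is vertical composition of $\eta$-arrows; unit and associativity of $m$ are exactly the $T$-multicategory unit and associativity equations specialized to $\eta$-arrows.

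In the reverse direction, from an internal category $(C_0, C_1, s, t, e, m)$ in $\TA$ I would build $V_0 := C_0$, and $V_1$ with $(V_1)_c = \coprod_{t \in T(1)_c} \{\,$pairs $(a : T[t] \to C_0,\ \phi \in (C_1)_c$ with $s(\phi) = h(a))\,\}$ where $h$ is the algebra composition in $C_0$; i.e. a $t$-arrow is a map out of the domain together with an arrow in $C_1$ from the composite of the domain. The domain map sends this to $a$, the codomain to $t(\phi)$; identities use $e$; composition of a $\mu(t,f)$-configuration uses $m$ together with the algebra associativity in $C_0$ to identify the iterated composite with the total composite. Universal arrows are those of the form $(a, e(h(a)))$, and these are visibly closed under identities and composition, giving uniform representability. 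I would then check the two constructions are mutually inverse up to natural isomorphism of categories, which is essentially bookkeeping: going $V \mapsto$ internal category $\mapsto V'$ recovers $V$ because every $t$-arrow of $V$ decomposes uniquely as universal-then-$\eta$, matching the pair $(a, \phi)$; going the other way recovers the internal category because the $\eta$-arrows of the reconstructed $V$ are precisely $C_1$.

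The main obstacle, I expect, is the careful verification in the forward direction that the presheaf-of-$\eta$-arrows $C_1$ genuinely carries a $T$-algebra structure and that $m$ is a $T$-algebra homomorphism — this is where uniform representability (the chosen universal arrows, closed under composition and identities) does the real work, and one must check that composing $\eta$-arrows ``horizontally'' along a $T$-operation (via factoring through universal arrows on domains and codomains) is compatible with the restriction maps in $\C$ and satisfies the $\eta,\mu$ algebra equations. This is the generalization of the double-category fact (\cref{representable_fc}) that horizontal composition of squares is induced by composing codomain $1$-cells and factoring through the universal arrow out of the domain; the same argument works verbatim, using closure of universal arrows under composition in place of associativity of horizontal composition. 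Everything else is a routine unwinding of definitions, and I would present it as such, perhaps illustrating with the $\fm$-multicategory/double-multicategory case from \cref{doublemulticats} to make the correspondence concrete.
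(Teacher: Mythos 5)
Your proposal is correct and follows essentially the same route as the paper's own sketch: $C_1$ is the presheaf of $\eta$-arrows with algebra structure obtained by composing targets and factoring through the chosen universal arrows (generalizing \cref{representable_fc}), and conversely a $t$-arrow is a pair consisting of $a : T[t] \to C_0$ and a $C_1$-cell out of the composite of $a$, with the identities of $C_1$ as the distinguished universal arrows. One incidental slip: $\TA$ is not in general a presheaf category (e.g.\ $\Cat$ for $T = \fc$), but the pullback $C_1 \times_{C_0} C_1$ exists anyway since limits in an Eilenberg--Moore category are created by the forgetful functor.
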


This claim is analogous to the same result equating representable plain multicategories and monoidal categories in \cite[Section 9]{Hermida}. The proof is entirely analogous, but we sketch the correspondence below. The relationship between $T$-multicategories and categories internal to $T$-algebras (also called ``$T$-structured categories'') is discussed further in \cite[Section 6.6]{leinster}.

\begin{proof}
Given a $T$-structured category $(C_0,C_1)$, we can define its underlying uniformly representable $T$-multicategory as follows:
\begin{itemize}
	\item its $c$-cells are the same as $C_0$
	\item for $t \in T(1)_c$, its $t$-arrows from $a : T[t] \to C_0$ to $b \in (C_0)_c$ are the $c$-cells in $C_1$ with source the composite of $a$ and target $b$
	\item identities and composites derive directly from the category structure on $(C_0,C_1)$; as a compatible arrangement of arrows corresponds to a $c$-cell in $C_1$ from the composite of $a : T[\mu(t,f)] \to C_0$ to the composite of $b : T[t] \to C_0$, which can be composed in the categorical (forward) direction with a $t$-arrow from the composite of $b$ to another $c$-cell
	\item the distinguished universal arrows are the $t$-arrows given by the identities in $C_1$ on the composite of $T[t] \to C_0$, which are closed under $T$-multicategory composition as identities are closed under composition in an internal category
\end{itemize}

Conversely, given a uniformly representable $T$-multicategory $V$, we define its corresponding $T$-structured category $(C_0,C_1)$ by:
\begin{itemize}
	\item $C_0$ is precisely $V_0$, consisting of the $c$-cells of $V$, which forms a $T$-algebra by \cref{representable_has_algebra}
	\item $C_1$ has $c$-cells given by the $\eta(c)$-arrows in $V$, with source and target $c$-cells in $C_0$ defined as the source and target of the arrow in $V$
	\item the $T$-algebra structure on $C_1$ is defined as in \cref{representable_fc}, where $a : T[t] \to C_1$ is composed by first composing its target $t(a) : T[t] \to C_1 \to C_0$, where the distinguished universal arrow witnessing that composition composes with $a$ in $V$ into a $t$-arrow from $a$ to $t(a)$.\footnote{The notation $t(a)$ means the target of $a$; not to be confused with the operation $t \in T(1)_c$. We do not believe this unfortunate overload of notation occurs anywhere else in this paper.} This $t$-arrow then factors through the universal arrow out of $s(a)$ via an $\eta(c)$-arrow which we regard as the composite of $a$
	\item identities are given by identity arrows in $V$
	\item composition is given by composition in $V$
\end{itemize}
\end{proof}

Representable $T$-multicategories are equivalent to a weaker analogue of $T$-structured categories. This requires redefining the latter as algebras for a monad analogous to $T$ on the 2-category of $\Cat$-valued presheaves on $\C$. $T$-structured categories are the strict algebras for this 2-monad, while pseudo-algebras provide a notion of weak $T$-structured categories (as discussed in \cite[Section 6.6]{leinster}) which are equivalent to representable $T$-multicategories. When $T$ is the free monoid monad, this recovers the equivalence of monoidal categories and representable multicategories of \cite[Section 9]{Hermida}.

\begin{ex}
A uniformly representable $\fdc$-multicategory is by \cref{representable_structured} a strict triple category, and a representable $\fdc$-multicategory is a triple category which is weak in the horizontal and vertical directions, though strict in the forward direction.
\end{ex}

\begin{defn}
For $\E \to \D$ a further restriction of cell shapes, a $T$-multifunctor between $\D$-representable $T$-multicategories is \emph{$\E$-strong} if it preserves universal arrows when restricted to $\E$, and a $T$-multifunctor between uniformly $\D$-representable $T$-multicategories is \emph{$\E$-strict} if it preserves the distinguished universal $t$-arrows $h_a$ when restricted to $\E$. We say such a $T$-multifunctor is simply \emph{strong} (resp. \emph{strict}) when it is $\D$-strong (resp. $\D$-strict).
\end{defn}

\begin{ex}
For $V,V'$ weak monoidal pseudo-double categories, general $\MC$-multifunctors $V \to V'$ correspond to lax monoidal lax double functors, while strong $\MC$-multifunctors correspond to strong monoidal pseudo-double functors. When $V,V'$ are strict (in both senses), strict $\MC$-multifunctors correspond to strict monoidal double functors. 

Meanwhile, 0-strong (resp. 0-strict) $\MC$-multifunctors look like lax monoidal lax double functors which are strong (resp. strict) monoidal on objects.
\end{ex}

\subsection{Triviality plus representability as extra structure}

The ultimate specialization of $\fc$-multicategories from \cite[Table 2.1]{LeinsterEnrichment} is the combination of ``vertically trivial'' and ``(uniformly) representable,'' which recovers (strict) monoidal categories and, ultimately, the classical definition of enriched categories. The combination of these two conditions generally produces lower-dimensional data (by $\D$-triviality) with extra algebraic structure (from representability) that often recovers bases of enrichment already present in the literature.

\begin{ex}
Completing the table in \cite[Table 2.1]{LeinsterEnrichment}, a (uniformly) representable 0-trivial $\fc$-multicategory is a (strict) monoidal category, and a (uniformly) representable 0-discrete $\fc$-multicategory is a bicategory (2-category).
\end{ex}

\begin{ex}
A 0-trivial (uniformly) representable $\MC$-multicategory is a category with two compatible (strict) monoidal structures, which by a standard Eckmann-Hilton type argument is the same as a braided (strict symmetric) monoidal category. A 0-discrete representable $\MC$-multicategory is analogously to a monoidal bicategory which is strict monoidal on objects, and a uniformly representable 0-discrete $\MC$-multicategory corresponds to a strict monoidal 2-category.
\end{ex}

\begin{ex}
When $u : \D \to \C$ is a restriction of cell shapes away from a choice $e$ of endpoint objects (\cref{endpoints}) and $T$ is a $\D$-graded familial monad on $\ch$, $\D$-trivial (uniformly) representable $T$-multicategories are equivalent to a (strict) $(T,e)$-structured category in the sense of \cite[Section 8.1]{mythesis}. Again using \cite[Corollary C.4.8]{leinster}, when $T$ is finitary over $e$ this includes (albeit non-canonically) any symmetric monoidal category.
\end{ex}

\begin{ex}
An example of a restriction of cell shapes away from an endpoint object is $\G_1 \vee \G_1$ inside $\G_1 \times \G_1$, and a $(\G_1 \vee \G_1)$-trivial representable $\fdc$-multicategory is a weak triple category with a unique object, vertical morphism, horizontal morphism, forward morphism, vertical-forward square, and horizontal-forward square. What remains are the vertical-horizontal squares and the cubes, which form the objects and morphisms of a category. This category has two weak monoidal structures given by horizontal and vertical composition which have isomorphic units and satisfy the interchange law, so by an Eckmann-Hilton type argument the two monoidal structures agree up to isomorphism and the resulting monoidal category is braided. Similarly a $(\G_1 \vee \G_1)$-trivial, uniformly representable $\fdc$-multicategory is a category with two strict mooidal structures satisfying interchange, resulting in a strict symmetric monoidal category.
\end{ex}

\begin{ex}
If we relax the definition of representability so that universal arrows need not be closed under composition, we see that a duoidal category $M$ (strict/weak) also provides a $(\G_1 \vee \G_1)$-trivial (uniformly/not uniformly) representable $\fdc$-multicategory: the squares are the objects of $M$, a composite of universal arrows goes from an $n \times m$ grid of objects in $M$ to any possible duoidal composite of that grid, and an $(n,m)$-arrow is then any composite of universal arrows followed by a morphism in $M$. Horizontal composition and identities are given by $(\star,J)$ and vertical composition and identities by $(\diamond,I)$. Every grid has a universal arrow to the vertical composite of its horizontal composites, and these are closed under vertical composition but not horizontal composition.
\end{ex}

\begin{ex}\label{verticallytrivialrep}
A $(\G_1 \times 0)$-trivial (or \emph{vertically trivial}) representable $\fdc$-multicategory $V$ is a weak triple category with just one object, one vertical morphism, one forward morphism, and one vertical-forward square. This is precisely the data of a double category whose objects are the horizontal morphisms of $V$, vertical morphisms are the vertical-horizontal squares, forward morphisms are the horizontal-forward squares in $V$, and squares are the cubes in $V$. Furthermore, horizontal composition in $V$ makes this a monoidal double category.
\end{ex}

\section{Enrichment}\label{sec:enrichment}

Throughout this section, $u : \D \to \C$ will denote a restriction of cell shapes, $T$ a $\D$-graded familial monad on $\ch$, and $V$ will denote a $T$-multicategory. We will also sometimes consider a further restriction of cell shapes $v : \E \to \D$

\begin{defn}\label{def:enrichment}
A $(V,\D)$-enriched $T$-algebra consists of a $T_\D$-algebra $A$ and a $T$-multifunctor $\Homm : Mu_\ast A \to V$. 
\end{defn}

Unpacking this, an enriched $T$-algebra with the cell shapes of $\D$ regarded as lower-dimensional consists of:
\begin{itemize}
	\item a $T_\D$-algebra $A$ 
	\item for each $d$-cell in $A$, a $d$-cell in $V$ with an arrow in $V$ from ay composable arrangement of these cells to their composite. These cells can be considered as ``book-keeping,'' merely recording the types of objects that will describe the higher dimensional cells in the enriched $T$-algebra
	\item for each $c$ in $\C$ but not in $\D$, and each possible $c$-cell position in $A$ (that is, each map $u^\ast y(c) \to A$ in $\dh$), a $c$-cell in $V$ whose boundary $d$-cells agree under $\Homm$ with the corresponding boundary cells in $A$. These are the ``Hom-objects'' of the enriched $T$-algebra, which are closest to the classical setting when the $c$-cells of $V$ are similar to sets with additional structure. The lower dimensional book-keeping for the $d$-cells in $A$ determines which $c$-cells are eligible to be a particular Hom-object
	\item for each $t \in T(1)_c$ and $a : u^\ast T[t] \to A$ which composes on its boundary into $b : u^\ast y(c) \to A$, a ``composition map'' $t$-arrow in $V$ from $\Homm(a) : Mu_\ast u^\ast T[t] \to Mu_\ast A \to V$ to $\Homm(b)$. This map from the many Hom-objects ($c$-cells) in $V$ making up $\Homm(a)$ to the single Hom-object $\Homm(b)$ is analogous to the map from $\Homm(x,y) \otimes \Homm(y,z)$ to $\Homm(x,z)$ in a classical enriched category.
\end{itemize}

\begin{defn}
A map of $(V,\D)$-enriched $T$-algebras from $(A,\Homm)$ to $(A',H')$ is a map of $T_\D$-algebras $A \to A'$ along with a transformation of $T$-multifunctors as below:
\bctikz
Mu_\ast A \ar{dr}[swap]{H} \ar{rr} \ar[Rightarrow, shorten=16, shift right=5]{rr} & & Mu_\ast A' \ar{dl}{H'} \\
& V
\ectikz
\end{defn}

\begin{ex}\label{enriched_cats}
A category ($\fc$-algebra) enriched in an $\fc$-multicategory $V$ is precisely as described in \cite[Section 2.2]{LeinsterEnrichment}. It consists of: 
\begin{itemize}
	\item a set ($\fc_0$-algebra) $A$ 
	\item vertices $\Homm(a)$ in $V$ for each element $a$ of $A$
	\item edges $\Homm(a,b)$ in $V$ from $\Homm(a)$ to $\Homm(b)$ for each pair of objects $a,b$ in $A$
	\item composition $n$-arrows from $\Homm(a_0,a_1),...,\Homm(a_{n-1},a_n)$ to $\Homm(a_0,a_n)$ for each $n \in \nats$ and $a_0,...,a_n \in A$, satisfying unit and associativity equations
\end{itemize}
\end{ex}

\begin{ex}
If $\D$ is the empty category $\varnothing$, $T_\varnothing$ is the identity monad on the terminal category $\widehat{\varnothing}$. For $A$ the unique $T_\varnothing$-algebra, $u_\ast A$ is the terminal $T$-algebra and $Mu_\ast A$ is the terminal $T$-multicategory. A $(V,\varnothing)$-enriched $T$-algebra then consists of:
\begin{itemize}
	\item for each $c$ in $\C$, a $c$-cell $\Homm(c)$ in $V$ forming a map $\Homm_0 : * \to V_0$ in $\ch$
	\item for each $t \in T(1)_c$, a single $t$-arrow from $\Homm_0 \circ ! : T[t] \to * \to V_0$ to $\Homm(c)$, natural in $c$
	\item such that these arrows are closed under identities and composites
\end{itemize}

For instance a $(V,\varnothing)$-enriched category is then a horizontal monoid in the virtual double category $V$, meaning a horizontal endomorphism $m$ equipped with squares from the $n$th iterated composite of $m$ to $m$ for $n \ge 0$ closed under composition in $V$.
\end{ex}

\begin{ex}
When $u : \D \to \C$ is a restriction of cell shapes away from a choice $e$ of endpoint objects (\cref{endpoints}), $T$ is a $\D$-graded familial monad on $\ch$, and $V$ is $\D$-trivial and representable (that is, a $(T,e)$-structured category as in \cite[Section 8.1]{mythesis}), a $(V,\D)$-enriched $T$-algebra is the same as a $V$-enriched $T$-algebra in the sense of \cite[Section 8.2]{mythesis}.
\end{ex}

\begin{ex}\label{enriched_moncats}
We call a $(V,\G_0)$-enriched $\MC$-algebra simply a $V$-enriched monoidal category. It consists of:
\begin{itemize}
	\item A monoid $A$
	\item For each $a \in A$ a vertex $\Homm(a)$ in $V_0$
	\item For each $a,a' \in A$, an edge $\Homm(a,a')$ from $\Homm(a)$ to $\Homm(a')$ in $V$. This is because an edge in $u_\ast A$ is determined by its source and target
	\item For each $a_1,...,a_n \in A$, an $n$-to-1 arrow from $\Homm(a_1),...,\Homm(a_n)$ to $\Homm(a_1 \cdots a_n)$ in $V$. This includes a 0-to-1 arrow to $\Homm(e)$ for $e$ the unit of $A$
	\item For each list of elements $a_{1,0},...,a_{1,m_1},...,a_{n,0},...,a_{n,m_n} \in A$, an arrow in $V$ from the arrangement of paths of the form 
$$\Homm(a_{i,0},a_{i,1}),...,\Homm(a_{i,m_i-1},a_{i,m_i})$$ to the edge $$\Homm(a_{1,0} \cdots a_{n,0},a_{1,m_1} \cdots a_{n,m_n}).$$
	\item These arrows are closed under identities and composition in $V$
\end{itemize}
This definition is rather complicated, but simplifies in the case when $V$ is representable, which is to say a monoidal double category (we will assume it is weak and pseudo- henceforth unless otherwise specified) where we can call the edges ``morphisms.''. In that case, a $V$-enriched monoidal category consists of:
\begin{itemize}
	\item A monoid $A$
	\item For each $a \in A$ an object $\Homm(a)$ in $V$
	\item For each $a,a' \in A$, a morphism $\Homm(a,a')$ in $V$ from $\Homm(a)$ to $\Homm(a')$
	\item A forward arrow $I \to \Homm(e)$ for $e$ the unit of $A$ and $I$ the unit of $V$
	\item For each $a_1,a_2 \in A$, a forward arrow $\Homm(a_1) \otimes \Homm(a_2) \to \Homm(a_1 a_2)$ in $V$
	\item These forward arrows satisfy the unit and associativity equations of a lax monoidal functor from the discrete monoidal category $A$ to the forward monoidal category of $V$
	\item For each $a_1,a_2,a'_1,a'_2 \in A$, a square
\bctikz[column sep=80]
\Homm(a_1) \otimes \Homm(a_2) \ar[""{name=S, below}]{r}{\Homm(a_1,a'_1) \otimes \Homm(a_2,a'_2)} \dar & \Homm(a'_1) \otimes \Homm(a'_2) \dar \\
\Homm(a_1a_2) \ar[""{name=T, above}]{r}[swap]{\Homm(a_1a_2,a'_1a'_2)} & \Homm(a'_1a'_2)
\arrow[Rightarrow,shorten=5,from=S,to=T]
\ectikz
	\item For each $a,a',a'' \in A$, a square
\bctikz
\Homm(a) \rar{\Homm(a,a')} \dar[equals] & \Homm(a') \rar{\Homm(a',a'')} & \Homm(a'') \dar[equals] \\
\Homm(a) \ar[""{name=T, above}]{rr}[swap]{\Homm(a,a'')} & & \Homm(a'')
\arrow[Rightarrow,shorten=5,from=1-2,to=T]
\ectikz
	\item These squares satisfy unit and associativity equations with respect to both products and compositions, as well as an interchange equation
\end{itemize}
This data can be summarized as a monoid $A$ and a lax monoidal lax double functor from the monoidal category\footnote{Here $u_\ast A$ is regarded as a double category with only identity forward morphisms and squares} $u_\ast A$ to $V$.
\end{ex}

The book-keeping in $V$ for the lower dimensional cells has thus far only assumed a $t$-arrow in $V$ from $\Homm(a)$ to $\Homm(b)$ for $t \in T(1)_d$, $a : T[t] \to A$, and $b \in A_d$, meaning a lax monoidal functor to $V$ in the previous example. But when $V$ is representable or uniformly representable, we can impose universality conditions on these arrows that correspond to the book-keeping part of $\Homm$ weakly or strictly preserving $T$-algebra structure (so replacing this lax monoidality with strong or strict monoidality).

\begin{defn}\label{strong_strict}
When $V$ is (uniformly) $\E$-representable, a $(V,\D)$-enriched $T$-algebra $(A,\Homm)$ is $\E$-strong (resp. $\E$-strict) if $\Homm$ is $\E$-strong (resp. $\E$-strict). We say $(A,\Homm)$ is simply \emph{strong} (resp. \emph{strict}) when it is $\D$-strong (resp. $\D$-strict).
\end{defn}

\begin{ex}
A $V$-enriched monoidal category $(A,\Homm)$, where $V$ is a monoidal double category, is 0-strong when the vertical morphisms $\Homm(a_1) \otimes \Homm(a_2) \to \Homm(a_1a_2)$ are isomorphisms. When $V$ is strict monoidal, $(A,\Homm)$ is 0-strict when these are in fact identities. 

This may seem unusual from the perspective of lax double functors (it is uncommon to consider functors which are strong monoidal on objects but lax monoidal on morphisms), but from an enrichment point of view it is fairly natural, as it is reasonable to expect the monoid $A$ of objects in an enriched monoidal category to maintain its form in $V$. In other words, the emphasis in enrichment is on modeling only the higher dimensional cells in $V$ using the $\Homm(a,a')$'s, so it is expected that there would be a non-invertible map 
$$\Homm(a_1,a'_1) \otimes \Homm(a_2,a'_2) \to \Homm(a_1a_2,a'_1a'_2),$$
describing how morphisms are tensored together. The objects $\Homm(a)$ do not carry the same interpretation as collections of cells, instead serving more of a book-keeping role. They merely allow the $\Homm(a,a')$'s to live in different categories when desireable, so there is no intuitive reason to expect that the maps $\Homm(a_1) \otimes \Homm(a_2) \to \Homm(a_1a_2)$ should not be isomorphisms or even identities. 
\end{ex}

\begin{ex}
When $M$ is a duoidal category regarded as a representable $(\G_1 \vee \G_1)$-trivial $\fdc$-multicategory, an $(M,\G_1 \vee \G_1)$-enriched double category agrees with the definition of ++-enriched double category in \cite{AguiarEnrichment}, which amounts to a pair $A$ of 1-categories with the same objects, for each square boundary $\alpha$ in $A$ an object $\Homm(\alpha)$ in $M$, and morphisms in $M$ corresponding to identities and composition:
\begin{itemize}
	\item when $\alpha$ has identities as its horizontal arrows, a morphism $J \to \Homm(\alpha)$ in $M$
	\item when $\alpha$ has identities as its vertical arrows, a morphism $I \to \Homm(\alpha)$ in $M$
	\item when $\alpha,\alpha'$ are horizontally adjacent with composite square boundary $\alpha''$, a morphism $\Homm(\alpha) \star \Homm(\alpha') \to \Homm(\alpha'')$
	\item when $\alpha,\alpha'$ are vertically adjacent with composite square boundary $\alpha''$, a morphism $\Homm(\alpha) \diamond \Homm(\alpha') \to \Homm(\alpha'')$
\end{itemize}
These morphisms must satisfy unit, associativity, and interchange laws analogous to those in a double category, including a unit interchange equation which says that when $\alpha$ has identities as both its horizontal and vertical arrows, $I \to J \to \Homm(\alpha)$ agrees with $I \to \Homm(\alpha)$.

A double category enriched in a representable $\fdc$-multicategory $V$, namely a weak triple category, consists of forward-lax functors from the horizontal/vertical categories of a shared-object pair of categories $A$ to the horizontal/vertical categories of $V$, along with horizontal-vertical squares for each square boundary in $A$ and composition cubes. It is strong when these are pseudo-functors, and when $V$ is uniformly representable (a strict triple category) the enriched double category is strict when these are ordinary functors. 
\end{ex}

\begin{ex}\label{vertenricheddoublecats}
For $V$ a virtual triple category, $u : \G_1 \times 0 \to \G_1 \times \G_1$, and $A$ a category, a virtual triple functor $Mu_\ast A \to V$ amounts to suitable choices of objects and vertical arrows for those in $A$ with forward-vertical squares witnessing each composition in $A$, a horizontal arrow for each pair of objects in $A$, a square for each pair of arrows in $A$, and squares/cubes in the forward direction of $V$ for each composition map in this \emph{vertically $V$-enriched} double category. 

When $V$ is representable and vertically trivial, hence a monoidal double category (\cref{verticallytrivialrep}), a vertically $V$-enriched double category amounts to 
\begin{itemize}
	\item a category $A$
	\item for each pair of objects $a,b$ in $A$, an object $\Homm(a,b)$ of $V$
	\item for each pair of morphisms $f : a \to b,f' : a' \to b'$ in $A$, a vertical arrow $\Homm(f,f')$ of $V$ from $\Homm(a,b)$ to $\Homm(a',b')$
	\item for each object $a$ in $A$, a forward arrow $I \to \Homm(a,a)$
	\item for each triple of objects $a,b,c$ in $A$, a forward arrow $\Homm(a,b) \otimes \Homm(b,c) \to \Homm(a,c)$
	\item for each morphism $f$ in $A$, a square from $\id_I$ to $\Homm(f,f)$
	\item for each triple of morphisms $f,f',f''$ in $A$, a square from $\Homm(f,f') \otimes \Homm(f',f'')$ to $\Homm(f,f'')$
	\item for each object $a$ in $A$, a forward-globular square from $\id_{\Homm(a,a)}$ to $\Homm(\id_a,\id_a)$ 
	\item for $a \atol{f} b \atol{g} c$ and $a' \atol{f'} b' \atol{g'} c'$ in $A$, a forward-globular square from $\Homm(g,g') \circ \Homm(f,f')$ to $\Homm(g \circ f,g' \circ f')$
	\item these composition maps satisfy unit, associativity, and interchange equations and commute with sources and targets in $A$
\end{itemize}
This definition is in fact equivalent to the following:
\begin{itemize}
	\item a category $A$
	\item a category enriched in the forward monoidal category of $V$ with the same objects as $A$
	\item a category enriched in the monoidal category of vertical arrows and squares in $V$ whose objects are the morphisms of $A$
	\item these enriched categories respect sources and targets, and laxly respect identities and composites in $A$
\end{itemize}

This definition is very nearly saying that ``a category internal to categories enriched in a category internal to monoidal categories is the same as a category internal to enriched categories.'' A statement like this could perhaps be made more formal given a suitable definition of morphism between enriched categories with varying enrichment bases, but we do not pursue this further.
\end{ex}






\appendix
\section{Enriched $T$-Multicategories}\label{sec:enrichedmulticats}

We describe how for $T$ a familial monad on $\ch$, $T$-multicategories are algebras for a familial monad $T^+$ on $\cplush$. The monad $T^+$ was defined by Leinster for any cartesian monad $T$ in \cite[Appendix A, Theorem 1.2.1]{LeinsterEnrichment}, and shown to be familial whenever $T$ is in \cite[Proposition 6.5.5]{leinster}. Here we give unwinded descriptions of the category $\C^+$ and the familial representation of $T^+$ for familial $T$. 

There is always a restriction of cell shapes $\C \to \C^+$, and we show that enriching $T$-multicategories with respect to this restriction recovers Leinster's definition of enrichment for $T$-multicategories.

\subsection{$\C^+$ and $T$-graphs}

\begin{defn}
For $T$ a familial monad on $\ch$, $\C^+$ is the category containing:
\begin{itemize}
	\item a copy of $\C$, whose objects represent ``$c$-cells'' 
	\item a disjoint copy of $\sint T(1)$, whose objects represent ``$t$-arrows.'' When $t \in T(1)_c$, for each face of the cell shape $c$ (given by a morphism $i : c' \to c$ in $\C$) there is a corresponding face $i_t : t_i \to t$, where $t_i = T(1)_i t \in T(1)_{c'}$
	\item for each $t \in T(1)_c$, a ``target'' face map $\tau_t : c \to t$ in $\C^+$ 
	\item for each $t$ in $T(1)$ and $x \in T[t]_d$, a ``source'' face map $\sigma_{x} : d \to t$ in $\C^+$
\end{itemize}
subject to the equations:
\begin{itemize}
	\item $\tau_t \circ i = i_t \circ \tau_{t_i} : c' \to t$ for each $i : c' \to c$ in $\C$ and $t \in T(1)_c$
	\item $\sigma_x \circ j = \sigma_{x_j} : d' \to t$ for each $t \in T(1)_c$, $j : d' \to d$ in $\C$, and $x \in T[t]_d$
	\item $i_t \circ \sigma_x = \sigma_{T[i_t]x} : d \to t$ for each $i : c' \to c$, $t \in T(1)_c$, and $x \in T[t_i]_d$
\end{itemize}
\end{defn}

There is a canonical restriction of cell shapes $u : \C \to \C^+$, as there are no morphisms in $\C^+$ from the objects in $\sint T(1)$ to those of $\C$. 

\begin{rem}
A more concise definition of $\C^+$ which differs from that of Leinster in \cite[Proposition 6.5.5]{leinster} is given by the ``Grothendieck Construction'' for functors $\G_1 \to \Prof$, the category of profunctors. Any functor $\two \to \Prof$ has a corresponding collage category over $\two$, and so a pair of parallel profunctors yields a category over $\G_1$. In this case, consider the ``polynomial'' or ``bridge diagram'' representation of $T$ first described in \cite[Remark 2.12]{WeberPra} and further discussed surrounding \cite[Definition B.4]{representability}. Each familial monad $T$ determines a functor $\sint T(1) \to \C$ and a span $\C \afm \soint T[-] \to \sint T(1)$, both of which induce profunctors from $\C$ to $\sint T(1)$. The Grothendieck Construction of this pair yields $\C^+$, and the resulting functor $\C^+ \to \G_1$ sends all of $\C$ to $\O$, all of $\sint T(1)$ to $\I$, the target maps to $t : \O \to \I$, and the source maps to $s : \O \to \I$.
\end{rem}

\begin{ex}
When $\C=\one$ and $T=\id_\Set$, $\C^+$ is $\G_1$, as the identity monad on sets is represented by a single operation with the singleton set as its arity, hence the unique source morphism from $\O$ to $\I$ in $\G_1$.
\end{ex}

\begin{ex}
When $\C=\one$ and $T=list$, the free monoid monad, $\C^+$ consists of a vertex $0$ as well as $n$-to-1 arrows for all $n \in \nats$, each with $n$ source maps from $0$ and one target map from $0$. Presheaves on this category are many-to-one graphs, which underly plain multicategories.
\end{ex}

More generally, presheaves in $\cplush$ admit a description as spans.

\begin{defn}\label{tgraph}
A $T$-graph is a span in $\ch$ of the form
\bctikz
& \ar{dl}[swap]{dom} Y \ar{dr}{cod} \\
TX & & X,
\ectikz
and a morphism of $T$-graphs is a map of spans such that the left leg is in the image of $T$.
\end{defn}

\begin{prop}\label{presheavestgraphs}
The category of $T$-graphs is equivalent to $\cplush$.
\end{prop}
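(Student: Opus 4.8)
The plan is to unwind the combinatorics of $\C^+$ and exhibit mutually inverse functors directly, the point being that the three defining relations of $\C^+$ encode exactly the naturality of the two legs of a span in $\ch$.

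First I would record the shape of $\C^+$: its object set is the disjoint union of a copy of $\Ob(\C)$ and a copy of $\Ob(\sint T(1))$; both $\C$ and $\sint T(1)$ sit inside $\C^+$ as full subcategories; and every ``cross'' morphism (each target map $\tau_t : c \to t$ and each source map $\sigma_x : d \to t$) points from a $\C$-object to an $\sint T(1)$-object, with no morphisms in the other direction. Consequently a presheaf $P$ on $\C^+$ amounts to: a presheaf $X := u^\ast P$ on $\C$; a presheaf on $\sint T(1)$, which under the standard equivalence $\widehat{\sint T(1)} \simeq \ch/T(1)$ is a presheaf $Y$ on $\C$ with $Y_c = \coprod_{t \in T(1)_c} P_t$ together with its projection to the presheaf $T(1)$; and the actions of the cross morphisms on $P$. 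I would then identify the latter with the legs of a $T$-graph: the target maps assemble into $cod : Y \to X$, while for $p \in P_t$ the elements $\sigma_x(p) \in X_d$ ($x \in T[t]_d$) form, via the familial formula $TX_c = \coprod_t \Hom_\ch(T[t],X)$, exactly (together with the index $t$) an element of $TX_c$, giving $dom : Y \to TX$ over $Y \to T(1)$.

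The step I expect to carry the real content is verifying that the three relations of $\C^+$ are precisely the three naturality conditions in play: $\tau_t \circ i = i_t \circ \tau_{t_i}$ is naturality of $cod$ in $c$; $\sigma_x \circ j = \sigma_{x_j}$ says $x \mapsto \sigma_x(p)$ is natural in $x$, hence genuinely a morphism $T[t] \to X$ in $\ch$; and $i_t \circ \sigma_x = \sigma_{T[i_t]x}$ is naturality of $dom$ in $c$, matching the restriction maps of $TX$, which precompose an arity map with $T[i_t] : T[t_i] \to T[t]$. For the reverse direction, given a span $TX \xleftarrow{dom} Y \xrightarrow{cod} X$ I would take $q$ to be the composite $Y \xrightarrow{dom} TX \to T(1)$ and set $P_c := X_c$, $P_t := q^{-1}(t) \subseteq Y_c$, with the face maps $i_t$ acting by the restriction maps of $Y$ (which preserve $q$-fibers), $\tau_t$ acting by $cod$, and $\sigma_x$ sending $p \in P_t$ to $a_p(x)$, where $a_p : T[t] \to X$ is the map classified by $dom(p)$; the relations of $\C^+$ then hold by the same bookkeeping. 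On morphisms, a presheaf map $P \to P'$ restricts to maps $X \to X'$ and $Y \to Y'$, and compatibility with the $\tau_t$'s (resp. the $\sigma_x$'s) is exactly commutativity with the $cod$ legs (resp. with the $dom$ legs, where the map $TX \to TX'$ is necessarily $T$ of $X \to X'$) --- which is the requirement ``the left leg is in the image of $T$'' from \cref{tgraph}. Since the two constructions are functorial and mutually inverse up to canonical isomorphism, the equivalence follows.

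One could instead argue more conceptually by invoking the description of $\C^+$ (in the remark following its definition) as the collage of a pair of parallel profunctors between $\C$ and $\sint T(1)$ --- one from the functor $\sint T(1) \to \C$, one from the span $\C \afm \soint T[-] \to \sint T(1)$ --- together with the standard Artin-gluing description of presheaves on such a collage; but the hands-on route above seems the most transparent for this statement.
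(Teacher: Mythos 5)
Your proposal is correct and follows essentially the same route as the paper's (sketched) proof: decompose a presheaf on $\C^+$ into its restriction $X$ to $\C$, the presheaf on $\sint T(1)$ reassembled as $Y_c = \coprod_{t} P_t$, and the actions of the cross morphisms giving the two legs of the span, with the converse construction inverse up to canonical isomorphism. You actually supply more detail than the paper does, in particular the matching of the three relations of $\C^+$ with the naturality of $dom$ and $cod$ and the identification of the ``left leg in the image of $T$'' condition at the level of morphisms.
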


\begin{proof}
(Sketch) Given a $T$-graph as in \cref{tgraph}, we define a presheaf on $\C^+$ as follows:
\begin{itemize}
	\item the $c$-cells and their faces are those of $X$
	\item the $t$-arrows for $t \in T(1)_c$ are the $c$-cells of $Y$ with domain of the form $T[t] \to X$ in $TX_c$
	\item the arrow faces of a $t$-arrow are given by the corresponding face maps in $Y$
	\item the target face of a $t$-arrow is given by its codomain $c$-cell in the $T$-graph
	\item the source faces of a $t$-arrow are given by the $d$-cells in its domain $T[t] \to X$
\end{itemize}
Conversely, given a presheaf $V$ in $\cplush$ we define a $T$-graph as follows:
\begin{itemize}
	\item let $X$ be the restriction $u^\ast V$ along the canonical restriction of cell shapes of $\C^+$. In other words, $X$ contains the $c$-cells of $V$
	\item let $Y$ be the left Kan extension of the restriction of $V$ to $\sint T(1)$ along the discrete fibration $\sint T(1) \to \C$. In other words, $Y_c$ is the disjoint union of $t$-arrows in $V$ over all $t \in T(1)_c$
	\item the codomain map is determined by the targets of the $t$-arrows, and the domain map is determined by their sources, which for a $t$-arrow is precisely the desired map $T[t] \to X$
\end{itemize}
It is straightforward to check that these constructions are quasi-inverse to each other.
\end{proof}

\subsection{Free $T$-multicategories and enrichment}

We now define the ``free $T$-multicategory'' familial monad $T^+$, by first describing some notation for the composable operations in a $T$-multicategory.

\begin{defn}\label{sequence}
A \emph{length $n$ $(T,c)$-sequence} for $n \in \nats$ and $c$ in $\C$ is a sequence $(t_1,...,t_n) \in T(1)_c$ equipped with maps $f_k : T[t_k] \to T(1)$ such that $t_{k+1} = \mu(t_k,f_k)$ for $k=1,...,n-1$. We say $t_n$ is the \emph{outer operation} of the sequence, which we sometimes refer to as a decomposition of $t_n$. When $n=0$, we define its outer operation to be $\eta(c) \in T(1)_c$.
\end{defn}

These sequences will form the operations in the free $T$-multicategory monad. We now proceed to define their arities inductively.

\begin{defn}\label{sequencearities}
The unique length 0 $(T,c)$-sequence has arity $y(c)$ in $\cplush$. The length 1 $(T,c)$-sequence $(t)$ has arity $y(t)$ in $\cplush$, the representable $t$-arrow. We now proceed inductively on the assumption that the arity of a length $n$ $(T,c)$-sequence contains a ``source subdiagram'' of the form $u_!T[t_n]$, which is to say, a copy of $T[t_n]$ built out of $c$-cells in $\cplush$. Of course this assumption holds in the base case as $y(t)$ has a copy of $T[t]$ as its source cells.

Assuming this hypothesis for $n-1$ and given a length $n$ $(T,c)$-sequence $(t_1,...,t_n)$, we can construct the arity $T^+[t_1,...,t_{n-1}]$ of its length $n-1$ subsequence $(t_1,...,t_{n-1})$. We now define the arity of $(t_1,...,t_n)$ as the pushout under $u_!T[t_{n-1}]$ of $T^+[t_1,...,t_{n-1}]$ along its source subdiagram and 
$$\colim \left( \sint T[t_{n-1}] \atol{\sint f_{n-1}} \sint T(1) \emb C^+ \emb \cplush \right)$$
along its target subdiagram 
$$\colim_{x \in T[t_{n-1}]_c} y(f_{k-1}(x)) \afm \colim_{x \in T[t_{n-1}]_c} y(c) \cong u_!T[t_{n-1}].$$
\end{defn}

\begin{ex}
An $(\fc,1)$-sequence of length 2 has as its arity the diagram in depicted below. Longer sequences would have aritities given by similarly shaped towers of $n$-to-1 squares with greater height. These are the composable diagrams in an $\fc$-multicategory.
\[
\begin{tikzcd}
\mdot \dar \ar[slash]{r} & \mdot \ar[slash]{r} & \mdot \dar \ar[slash, ""{name=D, below}]{r} & \mdot \dar \\
\mdot \dar \ar[slash, ""{name=A, above}]{rr} & {\color{white}{\mdot}} \ar[phantom, ""{name=B, below}]{r} & \mdot \ar[slash, ""{name=E, above}]{r} & \mdot \dar \\
\mdot \ar[slash, ""{name=C, above}]{rrr} & & & \mdot 
\arrow[Rightarrow,shorten=4,from=1-2,to=A]
\arrow[Rightarrow,shorten=5,from=B,to=C]
\arrow[Rightarrow,shorten=5,from=D,to=E]
\end{tikzcd}
\]
This diagram can be defined as the pushout of the lower square and the upper row of squares along the pair of adjacent edges connecting them, where the upper row of squares is a colimit of the form in \cref{sequencearities}. The outer operation of this sequence is $3 \in \fc(1)_1$, and as such the diagram resembles a decomposition of a 3-to-1 square.
\end{ex}

\begin{defn}
The monad $T^+$ on $\cplush$ is represented as follows:
\begin{itemize}
	\item $T^+(1)_c = \{*_c\}$ for all $c$ in $\C$, with $T^+[*_c] = y(c)$ the representable $c$-cell in $\cplush$
	\item $T^+(1)_t$ for $t \in T(1)_c$ is the set of $(T,c)$-sequences of any length with outer operation $t$.  The corresponding arities $T^+[t_1,...,t_n=t]$ are as defined in \cref{sequencearities}.
	\item the unit operations for $c$-cells are the unique such operaitons, and for $t$-arrows are given by the sequences $(t)$ of length 1 and representable arity
	\item multiplication is defined by observing that a map $T^+[t_1,...,t_n] \to T^+(1)$ in $\cplush$ amounts to choices of decompositions of each $t'$-arrow in $T^+[t_1,...,t_n]$ such that all $t'$-arrows in the $k$th row are assigned decompositions of length $m_k$. Transposing this assignment results in $m_k$ maps $T[t_k] \to T(1)$ in $\ch$ which after applying the multiplication $\mu$ for $T$ form a $(T,c)$-sequence of length $m_k$. These sequences for $k=1,...,n$ concatenate into a sequence of length $m_1 + \cdots + m_n$ in $T^+(1)_{t_n}$, which defines the multiplication and can be checked to have the the appropriate arity (see discussion following \cref{def:familial})
\end{itemize}
\end{defn}

It is straightforward to check that these operations are generated under the unit and multiplication constructions by the sequences of length 0 (which are only in $T^+(1)_{\eta(c)}$) and length 2, which are precisely the identities and compositions listed in the expanded definition of $T$-multicategories following \cref{def:multicat}. Hence algebras for this monad are $T$-multicategories, which are defined as $T$-graphs with the identities and composites indexed by the length 0 and 2 sequences satisfying the same unit and associativity equations determined by the definition of multiplication for $T^+$.

\begin{ex}
When $T=list$, the free monoid monad on sets, $T^+$ is the free multicategory monad $\fm$ on many-to-one graphs. Indeed, the trees which make up its operations and arities (see \cite[Example 2.14]{WeberPra}, ignoring the symmetries) are exactly the $list$-sequences from \cref{sequence}, where the height of a tree is the length of the corresponding sequence, and each term in the sequence is the number of vertices in each level of the tree (a number regarded as an element of $list(1) \cong \nats$).
\end{ex}

The definition of $T^+$ makes clear that it is $\C$-graded, as the operations for $c$-cells have representable arities arising from $\ch$. The restricted monad $(T^+)_\C$ is simply the identity monad on $\ch$, having a single operation for each $c$ in $\C$ with representable arity $y(c)$.

\begin{lem}
For $u : \C \to \C^+$ the canonical restriction of cell shapes and $A$ in $\ch$, $u_\ast A$ is isomorphic to the $T$-multicategory given by the span
$$TA \afml{\pi_1} TA \times A \atol{\pi_2} A.$$
\end{lem}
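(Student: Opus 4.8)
The plan is to identify the $T$-graph corresponding to $u_\ast A$ under the equivalence of \cref{presheavestgraphs} and read off its span. Recall from the proof of \cref{presheavestgraphs} that the $T$-graph attached to a presheaf $V$ on $\C^+$ has underlying presheaf $X = u^\ast V$ (the $c$-cells), middle object $Y$ with $Y_c = \coprod_{t \in T(1)_c} V_t$, the domain map sending a $t$-arrow to its source diagram $T[t] \to X$ and the codomain map sending it to its target cell. So the first step is to observe $u^\ast u_\ast A \cong A$: this holds because $\C$ sits inside $\C^+$ as a full subcategory which is downward closed (there are no morphisms in $\C^+$ from the objects of $\sint T(1)$ to those of $\C$), hence $u$ is fully faithful and the counit $u^\ast u_\ast \to \id$ is an isomorphism. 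Thus the underlying presheaf and the right leg of the $T$-graph corresponding to $u_\ast A$ are $A$, and it remains to compute $Y$.

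Next I would compute the $t$-arrows. By $u^\ast \dashv u_\ast$ and Yoneda, $(u_\ast A)_t \cong \Hom_\ch(u^\ast y(t), A)$, where $y(t)$ is the representable $t$-arrow in $\cplush$. The key point is that $u^\ast y(t) \cong y(c) \sqcup T[t]$ in $\ch$ for $t \in T(1)_c$: by the Grothendieck-construction description of $\C^+$, every morphism $d \to t$ in $\C^+$ with $d$ in $\C$ is either the target face $\tau_t$ after a morphism $d \to c$ in $\C$, or a source face $\sigma_x$ for a unique $x \in T[t]_d$; these two families are disjoint and natural in $d$ (composites through arrow faces $i_t : t_i \to t$ collapse to one of the two via $\tau_t \circ i = i_t \circ \tau_{t_i}$ and $i_t \circ \sigma_x = \sigma_{T[i_t]x}$). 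Hence $(u_\ast A)_t \cong A_c \times \Hom_\ch(T[t], A)$, and summing over $t \in T(1)_c$ yields $Y_c \cong A_c \times \coprod_{t \in T(1)_c} \Hom_\ch(T[t],A) = (A \times TA)_c$, naturally in $c$, with domain map $\pi_1$ and codomain map $\pi_2$ — i.e.\ the underlying span of $u_\ast A$ is $TA \afml{\pi_1} TA \times A \atol{\pi_2} A$. One can also bypass this representable computation: under \cref{presheavestgraphs} the functor $u^\ast$ becomes ``take the underlying presheaf $X$,'' whose right adjoint must send $A$ to the $T$-graph with middle object $TA \times A$, since a $T$-graph map into $TA \afml{\pi_1} TA \times A \atol{\pi_2} A$ is uniquely and freely determined by its action $X' \to A$ on underlying presheaves.

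Finally, for the $T$-multicategory structure: $T^+$ is $\C$-graded with $(T^+)_\C$ the identity monad on $\ch$, so by the proposition that (for $\D$-graded $T$) $u_\ast$ lifts to a functor between $T_\D$-algebras and $T$-algebras, $u_\ast A$ is a $T^+$-algebra, i.e.\ a $T$-multicategory, whose underlying span is the one just computed. That span carries a unique $T$-multicategory structure: for $c$ in $\C$, $t \in T(1)_c$, and $a : T[t] \to A$, the $t$-arrows with domain $a$ are exactly the pairs $(a,b)$ with $b \in A_c$, one for each possible target, so all unit and multiplication maps are forced — $u_\ast A$ is the ``indiscrete'' $T$-multicategory on $A$ — and equivalently one can exhibit the unit $(\eta_A,\id_A) : A \to TA \times A$ and the multiplication induced by $\mu$ directly and match them by this uniqueness. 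I expect the main obstacle to be the identification $u^\ast y(t) \cong y(c) \sqcup T[t]$: one must check that target and source faces really exhaust $\Hom_{\C^+}(d,t)$ and that the splitting is a coproduct of presheaves, which is precisely what the collage description of $\C^+$ (as the disjoint union of the ``target'' and ``source'' profunctors between $\C$ and $\sint T(1)$) provides.
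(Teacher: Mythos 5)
Your proposal is correct and follows essentially the same route as the paper: the key step in both is the identification $u^\ast y(t) \cong y(c) \sqcup T[t]$, giving $(u_\ast A)_t \cong \Hom_\ch(T[t],A) \times A_c$, and then distributing the product over the coproduct indexed by $t \in T(1)_c$ to obtain $TA \times A$ as the apex of the span via \cref{presheavestgraphs}. Your additional verification that the multicategory structure on this span is forced (indiscreteness) is a reasonable supplement that the paper's proof leaves implicit.
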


\begin{proof}
This follows from the observation that for each $t \in T(1)_c$, $u^\ast y(t)$ in $\ch$ is isomorphic to the disjoint union of the arity $T[t]$ and the representable $y(c)$. Therefore, $(u_\ast A)_t = \Hom(T[t],A) \times A_c$, so as products distribute over disjoint unions and $TA_c = \coprodl_{t \in T(1)_c} \Hom(T[t],A)$, applying the construction of a $T$-graph from $u_\ast A$ as in \cref{presheavestgraphs} yields $TA \times A$ as the apex of the $T$-graph, with the domain map given by projection to $TA$ and the codomain by projection to $A$ as desired. 
\end{proof}

We can now conclude that for $V$ a $T^+$-multicategory, a $(V,\C)$-enriched $T^+$-algebra in the sense of \cref{def:enrichment} is the same as a $V$-enriched $T$-multicategory in the sense of \cite[Definition 1.3.1]{LeinsterEnrichment}. That is, $Mu_\ast A$, for $u$ the canonical restriction of cell shapes $\C \to \C^+$, is the same as $M$ applied to the indiscrete $T$-multicategory defined in \cite{LeinsterEnrichment}, so both notions of enriched $T$-multicategory are defined as a $T^+$-multifunctor from $Mu_\ast A$ to $V$.

\begin{ex}\label{enrichedplainmulticats}
When $\C=\one$ and $T=list$, our definition of enrichment then says that $(V,\one)$-enriched multicategories are the same as Leinster's $V$-enriched multicategories, for $V$ an $\fm$-multicategory (\cite[Section 3]{LeinsterEnrichment}). They consist of a set $A$ of objects, an object $\Homm(a)$ in $V$ for each $a \in A$, an $n$-to-1 edge in $V$ from $\Homm(a_1),...,\Homm(a_n)$ to $\Homm(b)$ for each $a_1,...,a_n,b \in A$, and identities and composite arrows in $V$ corresponding to the structure of a plain multicategory. 

Multicategories can, like many other structures we have discussed, be enriched in a monoidal double category, by noting that representable $\fm$-multicategories are double multicategories, namely category objects in the category of multicategories. Restricting those multicategories to representable multicategories, which are in turn monoidal categories, recovers monoidal double categories as a special case of $\fm$-multicategories which are in this sense ``doubly representable.'' Examples of multicategories enriched in a monoidal double category can be found in \cite{adaptives}.
\end{ex}

\begin{ex}\label{weakenrichment}
While we defer a detailed description of this to future work, a ``$n$-category-enriched $T$-multicategory'' is a convenient base of enrichment for when one wants the equations between composition operations in an enriched $T$-algebra to hold only up to coherent (higher) isomorphism. When, for instance, there is a category (or $n$-category) of fixed-boundary $t$-arrows in a $T$-multicategory rather than a set, it is straightforward to demand that two sides of an equation between such $t$-arrows are merely isomorphic (or equivalent) rather than equal.

This can be defined for any version of strict, weak, or semi-strict $n$-categories, though what precisely the $T^+$-multicategory of $n$-categories should look like is rather subtle in full generality. In many cases however, the symmetric monoidal category of $n$-categories is sufficient to define such a $T^+$-multicategory.
\end{ex}




%

\section{Structures Enrichable in a Monoidal Double Category}\label{app:mdcat}

Here we list various monads $T$ for which $T$-algebras are enrichable in a monoidal double category (presumed to be weak in both the monoidal and horizontal directions) and recall for each how monoidal double categories relate to $T$-multicategories.

\begin{itemize}

	\item Categories can be enriched in $\fc$-multicategories (\cref{enriched_cats}), which include monoidal double categories by forgetting the monoidal structure, as double categories are representable $\fc$-multicategories.

	\item As discussed in \cref{doublemulticats} and \cref{enrichedplainmulticats}, plain multicategories can be enriched in $\fm$-multicategories, and monoidal double categories are representable representable $\fm$-multicategories, or equivalently representable double multicategories, meaning categories internal to representable multicategories.

	\item Monoidal categories can be enriched in $\MC$-multicategories, and monoidal double categories are representable $\MC$-multicategories as discussed in \cref{enriched_moncats}.

	\item Double categories can be horizontally enriched in $\fdc$-multicategories, and monoidal double categories are representable, vertically trivial $\fdc$-multicategories as discussed in \cref{vertenricheddoublecats}. 

\end{itemize}

\bibliography{mybib}
\bibliographystyle{alpha}

\end{document}